\DeclareMathOperator{\pnt}{\raise 0.5mm \hbox{\large\textbf{.}}}
\newcommand{\note}[2][ ]{}
\newtheorem{theorem}{Theorem}
\newtheorem{lemma}[theorem]{Lemma}
\newtheorem{corollary}[theorem]{Corollary}
\newtheorem{conjecture}[theorem]{Conjecture}
\newtheorem{problem}[theorem]{Problem}
\theoremstyle{definition}
\newtheorem{remark}[theorem]{Remark}
\begin{document}
\title[Parity of eta-quotients, II: The case of even-regular partitions]{Parity of the coefficients of certain eta-quotients, II: The case of even-regular partitions}
\author{William J. Keith and Fabrizio Zanello} \address{Department of Mathematical Sciences\\ Michigan Tech\\ Houghton, MI  49931-1295}
\email{wjkeith@mtu.edu, zanello@mtu.edu}
\thanks{2020 {\em Mathematics Subject Classification.} Primary: 11P83; Secondary:  05A17, 11P84, 11P82, 11F33.\\\indent 
{\em Key words and phrases.} Partition function; regular partition; density odd values; partition identity; modular form modulo 2; eta-quotient; parity of the partition function.}

\maketitle

\begin{abstract}
We continue our study of the density of the odd values of eta-quotients, here focusing on the $m$-regular partition functions $b_m$ for $m$ even. Based on extensive computational evidence, we propose an elegant conjecture which, in particular, completely classifies such densities: Let $m = 2^j m_0$ with $m_0$ odd. If $2^j < m_0$, then the odd density of $b_m$ is $1/2$; moreover, such density is equal to $1/2$ on \emph{every} (nonconstant) subprogression $An+B$. If $2^j > m_0$, then $b_m$, which is already known to have density zero, is identically even on  infinitely many non-nested subprogressions. This and all other conjectures of this paper are consistent with our \lq \lq master conjecture'' on eta-quotients presented in the previous work.\\ 
In general, our results on $b_m$ for $m$ even determine behaviors considerably different from the case of $m$ odd. Also interesting, it frequently happens that on subprogressions $An+B$, $b_m$ matches the parity of the multipartition functions $p_t$, for certain values of $t$. We make a suitable use of Ramanujan-Kolberg identities to deduce a large class of such results; as an example, $b_{28}(49n+12) \equiv p_3(7n+2) \pmod{2}$. Additional consequences are several \lq \lq almost always congruences'' for various $b_m$, as well as new parity results specifically for $b_{11}$.\\
We wrap up our work with a much simpler proof of the main result of a recent paper by Cherubini-Mercuri, which fully characterized the parity of $b_8$.
\end{abstract}

\section{Introduction and Statements of the Results and Conjectures}

The present paper is the second in a series that addresses the behavior modulo 2 of the $m$-regular partitions $b_m(n)$.  In turn, this is part of a far-reaching project on the longstanding open problem of determining the density of the odd values of the partition function $p(n)$.  The generating functions of both $b_m(n)$ and $p(n)$ are viewed as special cases of a  broad class of eta-quotients defined in our previous work \cite{KZ}, whose parity we believe to be regulated by a \lq \lq master conjecture'' (\cite{KZ}, Conjecture 4). We refer to the next section for all definitions and relevant terminology.

In \cite{KZ}, we mainly explored $b_m$ for $m$ odd, proving congruences or similarities of the form 
$$\sum_{n=0}^\infty b_m(An+B) q^n \equiv 0 \pmod{2}$$
and
$$\sum_{n=0}^\infty b_m(An+B) q^n \equiv \sum_{n=0}^\infty b_m(Cn+D) q^{jn} \pmod{2}.$$
(For the remainder of this paper, all congruences will be assumed to be modulo 2 unless explicitly stated otherwise. Congruences for power series hold in the ring $\mathbb{Z}_2[[q]]$.)

In \cite{KZ}, we avoided working with $m$ even since such identities often did not appear to exist except in trivial instances, a statement which we are now prepared to formalize as a conjecture.  We have at least a partial explanation for why this is the case: further work, presented here, shows that even-regular partitions often match the parity of $t$-multipartitions, for suitable values of $t$.  It was conjectured by Subbarao and proved by Ono \cite{Ono3}, Getz \cite{Getz}, Boylan and Ono \cite{BoylanOno}, and ultimately Radu \cite{RaduNoEvens} that there exist no nonconstant subprogressions $An+B$ where the partition function is identically even, i.e., $p(An+B)$ is never zero mod 2 for all $n$.  In fact,  this (and much more) is believed to hold true for any $t$-multipartition function with $t$ odd (see \cite{JudgeThesis,JZ}), the case $t=1$ being that of $p(n)$.  Heuristically, a similar behavior suggests that any function that matches one of the above, even in part, is unlikely to have identically even subprogressions.

Our first main theorem in this paper is the following:
\begin{theorem}\label{mainthm}
Let $m = 2^jm_0$ be even, with $m_0$ odd satisfying $5 \leq m_0 \leq 23$, $3 \nmid m_0$, and $m_0 > 2^j$. Let $\beta_m = \left(m_0^2 - 1\right)/24$.  Then
$$b_m\left(m_0^2 n - \beta_m + m_0 B\right) \equiv p_{m_0 - 2^j} (m_0 n + B)$$
for all $n$ and for the following $B$:
\begin{center}\begin{tabular}{|c|c|c|c|c|c|}
\hline $m_0$ & $m$ & $B$ & $m_0$ & $m$ & $B$ \\
\hline 5 & 10 & 1, 3 & 17 & 34 & 3, 5, 6, 8, 9, 11, 15, 16\\
\hline & 20 & 1, 2 & & 68 & 1, 5, 6, 10, 12, 13, 15, 16 \\
\hline 7 & 14 & 1, 5, 7 & & 136 &  2, 3, 7, 9, 10, 12, 13, 15 \\
\hline & 28 & 2, 3, 5 & & 272 & 1, 3, 4, 6, 7, 9, 13, 14 \\
\hline 11 & 22 & 1, 5, 6, 7, 9 & 19 & 38 & 1, 3, 8, 9, 12, 15, 16, 17, 18 \\
\hline & 44 & 1, 2, 3, 7, 10 & & 76 & 2, 5, 6, 11, 13, 15, 16, 17, 18 \\
\hline & 88 & 2, 3, 4, 6, 9 & & 152 & 3, 4, 7, 10, 11, 12, 13, 15, 17 \\
\hline 13 & 26 & 3, 6, 7, 8, 9, 12 & & 304 & 1, 3, 5, 6, 7, 8, 11, 14, 15 \\
\hline & 52 & 1, 3, 5, 6, 11, 12 & 23 & 46 & 3, 5, 8, 9, 12, 13, 15, 17, 18, 19, 20 \\
\hline & 104 & 2, 6, 9, 10, 11, 12 & & 92 & 1, 3, 6, 7, 10, 11, 13, 15, 16, 17, 18  \\
\hline & & & & 184 & 2, 3, 6, 7, 9, 11, 12, 13, 14, 20, 22 \\
\hline & & & & 368 & 1, 3, 4, 5, 6, 12, 14, 17, 18, 21, 22 \\
\hline
\end{tabular}
\end{center}
\end{theorem}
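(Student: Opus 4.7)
The plan is to prove this via Ramanujan--Kolberg (RK) identities, following the methodology used in the authors' previous paper \cite{KZ}. Modulo $2$, the elementary identity $(1-q^n)^2 \equiv 1 - q^{2n}$ combined with $m = 2^j m_0$ yields
\[
\sum_{n \ge 0} b_m(n)\, q^n \;\equiv\; \prod_{n \ge 1} \frac{(1-q^{m_0 n})^{2^j}}{1-q^n} \pmod 2,
\]
which, after multiplication by the appropriate $q$-power, is the expansion of the eta-quotient $\eta(m_0 \tau)^{2^j}/\eta(\tau)$. Analogously, $\sum_n p_{m_0-2^j}(n)\, q^n = \prod(1-q^n)^{-(m_0-2^j)}$ represents $\eta(\tau)^{-(m_0-2^j)}$ up to a fractional $q$-shift. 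After the standard normalizations, both series sit inside a finite-dimensional space of weakly holomorphic modular forms (with character) on $\Gamma_0(m_0^2)$, where RK-type machinery applies.

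Next, I would apply the $U_{m_0}$ operator to the suitably shifted left-hand series. The exponent $\beta_m = (m_0^2-1)/24$ is precisely the shift for which the subprogression $m_0^2 n - \beta_m + m_0 B$ of $b_m$ corresponds, under $U_{m_0}$, to the subprogression $m_0 n + B$ of the image series. Comparing with $U_{m_0}$ applied to $\sum p_{m_0-2^j}(n)q^n$, the asserted congruence becomes an equality mod $2$ between eta-quotients (or linear combinations thereof) living in a common finite-dimensional modular form space on $\Gamma_0(N)$ for some $N$ dividing a small multiple of $m_0^2$. A Sturm-type bound then reduces the proof to verifying finitely many Fourier coefficients for each pair $(m_0, j)$, which is routine, if tedious.

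The main obstacle is to identify \emph{which} residues $B$ produce the match, since the congruence certainly fails on most residue classes modulo $m_0^2$. The admissible $B$'s are dictated by the orbit and eigenspace structure of the $U_{m_0}$ action on the relevant modular form space mod $2$: only those orbits whose image aligns with the $U_{m_0}$-image of $\eta(\tau)^{-(m_0-2^j)}$ appear in the table. For small $m_0$ (say $5$ or $7$) this list is short and nearly transparent, but for $m_0 \in \{17, 19, 23\}$ together with the several admissible $j$, the bookkeeping is extensive. A secondary technical point is verifying holomorphicity (or controlled polar behavior) of the eta-quotients at all cusps of $\Gamma_0(m_0^2)$, which is needed to invoke the Sturm bound; the restrictions $3 \nmid m_0$ and $m_0 \le 23$ presumably reflect the range in which this cusp analysis and the associated computation can be carried out uniformly.
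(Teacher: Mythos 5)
Your overall instinct---that the theorem should come from Ramanujan--Kolberg identities---matches the paper's starting point, but the proposal leaves the actual engine of the proof as a black box, and that is a genuine gap. The paper's argument is: take the known mod-$2$ identity $q^{\beta_m}\,f_1^{-1}\,\vert\, U(m_0) \equiv f_1^{-m_0} + f_{m_0}^{-1}$ (Theorem \ref{RKidents}, valid precisely for the primes $5\le m_0\le 23$, $3\nmid m_0$), multiply both sides by $f_1^{2^j}\equiv f_{2^j}$, and use the multiplicative property of $U$ to pull the factor inside, giving
$$\sum_{n\ge 0} b_m(m_0 n-\beta_m)\,q^n \;\equiv\; \frac{1}{f_1^{\,m_0-2^j}} \;+\; \frac{f_{2^j}}{f_{m_0}}.$$
The first summand is the $p_{m_0-2^j}$ generating function; the listed values of $B$ are exactly the residues modulo $m_0$ on which the error term $f_{2^j}/f_{m_0}$ has no odd coefficients, and these are found by an elementary completing-the-square computation: the exponents of $f_{2^j}$ are $2^j$ times pentagonal numbers, which modulo $m_0$ land in a quadratic-residue-type set missing roughly half the classes. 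Your proposal never isolates this error term, so the step you call ``the main obstacle''---identifying which $B$ work---is not resolved by appealing to ``the orbit and eigenspace structure of the $U_{m_0}$ action''; that phrase restates the problem rather than solving it, and a Sturm-type bound can only confirm an identity after you know which identity to check on which progression.

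Two secondary points. First, a Sturm bound does not apply directly to weakly holomorphic eta-quotients with poles at cusps; you would need to clear denominators by multiplying by a suitable holomorphic form, which inflates the bound and must be justified---none of this is needed in the paper's elementary route. Second, your reading of the hypotheses is off: the restrictions $3\nmid m_0$ and $5\le m_0\le 23$ do not come from cusp analysis but from the range in which the clean two-term congruences $q\sum p(An+24^{-1})q^n \equiv f_1^{-A}+f_A^{-1}$ are known (for $m_0=25,29$ the identities acquire extra terms, handled separately in Theorem \ref{largermod}, and for $3\mid m_0$ they take a different shape entirely).
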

{\ }\\
\begin{remark}
What Theorem \ref{mainthm} means is that the coefficients in the power series expansion of
$$q^{\frac{m_0^2-1}{24}} \cdot \frac{f_m}{f_1} + \frac{1}{f_{m_0}^{{m_0}-2^d}}$$
are identically zero mod 2 in the progressions $m_0^2n+m_0B$.  (Here and throughout, we employ the standard notation $f_j = \prod_{i=1}^\infty (1-q^{ji})$.)  For instance, a case of this theorem is that, in
$$q^2 \frac{f_{28}}{f_1} + \frac{1}{f_7^3},$$
the coefficients of $q^{49n+14}$, $q^{49n+21}$, and $q^{49n+35}$ are all even.
\end{remark}

In addition, we have the following results for $m_0 = 25$ and $m_0 = 29$:
\begin{theorem}\label{largermod}
We have:
$$b_{50}(875n - 26 + 25B) \equiv p_{23} (35n+B)$$
for all $n$ when $B \in \{ 11, 18, 21, 26, 28, 33 \};$
$$b_{100}(875n - 26 + 25B) \equiv p_{21}(35n+B)$$
for all $n$ when $B \in \{ 6, 12, 17, 26, 27, 31 \};$
$$b_{200}(6125n - 26 + 25B) \equiv p_{17}(245n+B)$$
for all $n$ when $B \in \{ 22, 29, 92,127, 169, 239 \};$
$$b_{400}(1125 n - 26 + 25 B) \equiv p_{9} (45 n + B)$$
for all $n$ when $B \in \{3, 9, 18, 39\};$ and
$$b_{232}(841 n - 35 + 29 B) \equiv p_{21} (29 n + B)$$
for all $n$ when $B \in \{3, 6, 10, 12, 13, 14, 15, 18, 20, 23, 24, 25, 26, 28 \}.$
\end{theorem}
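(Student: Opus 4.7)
The plan is to extend the Ramanujan-Kolberg approach used in Theorem \ref{mainthm} to the composite case $m_0 = 25 = 5^2$ and the larger prime case $m_0 = 29$. Each target congruence has the form
\[ b_m(An + m_0 B - \beta_m) \equiv p_t(A' n + B) \pmod{2}, \]
with the left side equal to the coefficient of $q^{An + m_0 B}$ in $q^{\beta_m} f_m / f_1$, and the right side equal to the coefficient of $q^{A' n + B}$ in $1/f_{m_0}^{\,t}$. The strategy is to extract both sides via $U$-operator sieves, and then use Ramanujan-Kolberg identities (produced, e.g., by Radu's algorithm) to write each sieved series as a $\mathbb{Z}[[q]]$-linear combination of eta-quotients lying in a common finite-dimensional space of modular functions on some $\Gamma_0(N)$. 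After reducing mod~$2$ and comparing Fourier coefficients up to the Sturm bound of that space, each congruence becomes a finite check.

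For $m_0 = 29$, the progressions $841 n + 29B - 35$ have exactly the shape $m_0^2 n + m_0 B - \beta_m$ from Theorem \ref{mainthm}, and the proof is structurally identical, just at level divisible by $29^2$. For $m_0 = 25$, the left-hand moduli $875 = 5^3 \cdot 7$, $6125 = 5^3 \cdot 7^2$, and $1125 = 5^3 \cdot 3^2$ exceed $m_0^2 = 625$, so a single $U_{m_0}$-sieve is not sufficient; one additionally sieves by the auxiliary prime ($7$ or $3$) and works on $\Gamma_0(N)$ for $N$ divisible by $5^a \cdot 7^b$ or $5^a \cdot 3^b$ as appropriate. The consistent ratio $A/A' = m_0$ across all five families reflects a single $U_{m_0}$ application on the outer sieve, while the auxiliary prime factor accounts for the compositeness of $25$; this compositeness also explains why only a proper subset of residues $B$ yields a match with $p_t$, and identifying which $B$ succeed is itself part of the computational search.

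The main obstacle is computational complexity. At levels $29^2$ or $5^3 \cdot 7^2$, the relevant spaces of modular functions have substantially larger dimensions than in the $m_0 \leq 23$ range treated in Theorem \ref{mainthm}, so the Sturm bound grows and the eta-quotient expressions become unwieldy. Once an identity is produced, however, reducing modulo~$2$ and comparing Fourier coefficients up to the Sturm bound completes the proof of the corresponding congruence. The values of $B$ listed in the statement are exactly those residues for which this coefficient comparison succeeds, confirming the desired $b_m \equiv p_t$ congruence on the indicated subprogressions.
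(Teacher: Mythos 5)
Your proposal correctly identifies the starting point (Ramanujan--Kolberg identities for the moduli $25$ and $29$), but it then defers essentially all of the mathematical content to an unspecified ``finite check up to the Sturm bound,'' and in doing so it misses the actual mechanism of the theorem. The paper's proof uses the two explicit identities $q^2\sum_{n\ge0} p(25n+24)q^n \equiv 1/f_1^{25}+q/f_1+1/f_5^5$ and $q^2\sum_{n\ge0} p(29n+23)q^n \equiv 1/f_1^{29}+q/f_1^5+1/f_{29}$, multiplies through by $f_1^{2^j}\equiv f_{2^j}$, and pulls that factor inside $U(m_0)$ to get, for example, $\sum_{n\ge0} b_{400}(25n-26)q^n \equiv 1/f_1^{9} + qf_1^{15}+f_{16}/f_5^5$. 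The listed residues $B$ are then precisely those on which the two ``extra'' terms are identically even, and this is proved by elementary support analysis: triangular- and pentagonal-number arithmetic shows that $qf_1^{15}\equiv qf_1^3f_4^3$ misses $45n+\{3,9,18,39\}$ while $f_{16}/f_5^5$ misses $5n+\{3,4\}$, and similarly in the other clauses. Your proposal never identifies these extra terms, never explains where the auxiliary moduli $35$, $45$, $245$ come from --- they are the moduli of the vanishing statements for the extra terms, not the result of an ``additional sieve by the auxiliary prime'' applied to the left side, since only one application of $U(25)$ or $U(29)$ ever occurs --- and gives no argument for why the particular values of $B$ in the statement succeed.

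The reduction to ``comparing Fourier coefficients up to the Sturm bound'' is also not justified as stated. The right-hand sides $p_t(A'n+B)$ are coefficients of $1/f_1^{t}$, which is not a holomorphic modular form, so a common finite-dimensional space with a mod-$2$ Sturm-type bound must actually be exhibited before any finite check is meaningful; and the $b_{200}$ clause requires showing that $f_1^{7}$ is identically even on $49n+\{21,28,42\}$, which the paper obtains from the representation $24n+7=x^2+3y^2$ and quadratic nonresidues mod $7$ (via the Cherubini--Mercuri characterization of the parity of $b_8$), not from a coefficient comparison. A Radu-style verification could perhaps be organized for each clause, but you would need to specify the level, the space, the bound, and the identities being compared, and then certify the vanishing of the extra terms on the named progressions; none of that is present, so as written the proposal is a plan rather than a proof.
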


Note that, here, the subprogressions are sometimes of greater modulus than $m_0^2$.  Additionally, while in the previous theorem all integer values of $j$ in the range $1\leq j \leq \text{log}_2 m_0$ appeared, that is not the case in Theorem \ref{largermod}.  We suspect no further subprogressions of this type exist for $m_0 = 29$. We discuss the reasons for it after the proof of the theorem.

On the other hand, we conjecture that while more elaborate similarities may occur when $3 \vert m_0$, no result exists of the simple form given in Theorem \ref{mainthm}:
\begin{conjecture}
No such congruence as in Theorem \ref{mainthm} holds when $m_0\equiv 0\pmod{3}$.
\end{conjecture}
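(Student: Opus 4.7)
The plan is to prove the non-existence of the stated similarity by combining a formal divisibility obstruction with a modular-form-theoretic analysis. The crucial formal observation is that $\beta_m = (m_0^2 - 1)/24$ is an integer if and only if $\gcd(m_0, 6) = 1$. Indeed, for odd $m_0$ we always have $8 \mid m_0^2 - 1$, but modulo $24$ one has $m_0^2 \equiv 9$ when $3 \mid m_0$, so in that case $\beta_m \notin \mathbb{Z}$ and the literal congruence is ill-defined. This already rules out the exact formulation of Theorem \ref{mainthm}; the substantive content of the conjecture is that no natural rephrasing of the similarity --- e.g., replacing $\beta_m$ by a nearby integer or rescaling the arithmetic progression --- restores it.

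To address that substantive content, I would reinterpret the statement as an identity of eta-quotients modulo 2, following the Remark after Theorem \ref{mainthm}. The form $q^{(m_0^2-1)/24}\, f_m/f_1$ corresponds to a weight-$0$ eta-quotient on $\Gamma_0(m)$, and $1/f_{m_0}^{m_0 - 2^j}$ to one on $\Gamma_0(m_0)$. Their orders at each cusp of the common refinement $\Gamma_0(\lcm(m, m_0^2))$ determine which arithmetic progressions can be extracted by a $U_{m_0^2}$-operator and compared. When $\gcd(m_0, 6) = 1$, these orders match modulo 2 in a way that produces the tables of Theorem \ref{mainthm}; when $3 \mid m_0$, the cusps of ramification $3$ contribute an extra fractional term $(m_0^2 - 1)/24 \pmod 1$ on the left that has no counterpart on the right. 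I would formalize this by computing the cusp orders explicitly and showing that the relevant $U$-operator images cannot agree on any nontrivial progression.

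The main obstacle is converting the cusp-structure mismatch into a verifiable non-existence statement: proving that two distinct mod-2 modular forms are nonzero on every relevant progression is considerably harder than proving equality on a given one. A practical route is to first verify the conjecture computationally for each $m_0 \in \{3, 9, 15, 21, 27\}$ and all $j \geq 1$ with $m_0 > 2^j$, and use these cases to isolate the precise obstruction invariant. One could then hope to lift the argument to a uniform proof by invoking Radu-type results on the span of mod-2 modular forms on $\Gamma_0(m)$ when $3 \mid m$, parallel to the Ramanujan-Kolberg machinery that established Theorem \ref{mainthm}.
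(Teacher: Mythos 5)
The statement you are addressing is a \emph{conjecture} in the paper: the authors do not prove it, and only offer a heuristic justification, namely that the underlying Ramanujan--Kolberg identities (Conjecture \ref{JZconj} and Theorem \ref{RKidents}), which drive the proof of Theorem \ref{mainthm}, take a structurally different form when $3 \mid m_0$ (the residue $b$ is computed as $\frac{t}{3}\cdot 8^{-1}$ rather than $24^{-1}$ modulo $a$, precisely because $24$ is not invertible modulo $a$ in that case). Your opening observation --- that $\beta_m = (m_0^2-1)/24$ fails to be an integer when $3 \mid m_0$, since $m_0^2 - 1 \equiv 2 \pmod{3}$ --- is correct and is essentially the same arithmetic obstruction the authors have in mind. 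That part is sound and worth stating.

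However, your proposal does not constitute a proof, and you acknowledge as much. After the divisibility remark, everything is a research plan: you propose to compute cusp orders, to verify finitely many cases numerically, and to ``hope to lift the argument'' via Radu-type results. The central difficulty you identify --- showing that two mod-2 modular forms fail to agree on \emph{every} candidate progression --- is exactly the kind of non-existence statement that is known to be very hard (compare the proof of Subbarao's conjecture in \cite{RaduNoEvens}, which required substantial machinery for a single such non-vanishing claim). No mechanism is supplied for turning the ``cusp-structure mismatch'' into a rigorous obstruction, and the claim that cusps of ramification $3$ contribute ``an extra fractional term with no counterpart on the right'' is asserted rather than derived. In short: the integrality observation trivially rules out the \emph{literal} congruence of Theorem \ref{mainthm}, but the substantive content of the conjecture (no rescaled or rephrased analogue exists) remains unproven in your proposal, just as it remains unproven in the paper.
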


The reason for these phenomena is explained by our method of proof: the results of Theorem \ref{mainthm} follow from the existence of certain {Ramanujan-Kolberg identities} of the form
$$q\sum_{n=0}^\infty p_t(an+b)q^n \equiv \frac{1}{f_1^{at}} + \frac{1}{f_a^t},$$
which hold for specific triples $(a,b,t)$ when $a<24$, as proven in \cite{JudgeThesis,JKZ}.  Each case of such identities will yield the parity congruences stated above. Identities for $m_0=25$ and $m_0=29$ do exist but are more complex, and therefore produce a more complicated behavior. Finally, when $m_0\equiv 0\pmod{3}$, such identities are of a different nature entirely.

We also use this viewpoint to fill in an empty spot in \cite{KZ}: in that paper, we were unable to produce congruences for the odd-regular case $b_{11}$ beyond a finite family already known to Zhao, Jin, and Yao \cite{JYZ}, namely:
$$b_{11}(22n+B) \equiv 0 \pmod{2} \, \text{ for } \, B \in \{2,8,12,14,16\}.$$
With the techniques of this paper, we can now add the following:
\begin{theorem}\label{eleventhm}
It holds that
$$b_{11}(242n+B) \equiv 0 \pmod{2}$$
for all $n$, when $B \in \{ 28, 94, 182, 204, 226 \}$.
\end{theorem}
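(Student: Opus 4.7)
Looking at the five values of $B$, each satisfies $B \equiv 6 \pmod{22}$ with $B = 22k + 6$ for $k \in \{1, 4, 8, 9, 10\}$. Thus the theorem is really saying that within the subprogression $b_{11}(22n+6)$ (which is \emph{not} in the known Zhao--Jin--Yao list), exactly five of the eleven residue classes modulo $11$ are identically even.

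My proof plan mirrors the method used for Theorem \ref{mainthm}: produce a Ramanujan--Kolberg-style identity modulo $2$ that relates $b_{11}$ on the progression $22n+6$ to a multipartition function with known identically-even subprogressions. First, starting from
$$\sum_{n\geq 0} b_{11}(n)\, q^n \;=\; \frac{f_{11}}{f_1},$$
I would perform a $2$-dissection using standard identities for $f_1$ and $f_{11}$ modulo $2$, isolating the generating series $G(q) = \sum_n b_{11}(22n+6)\, q^n \pmod{2}$ as an explicit $\mathbb{Z}_2$-linear combination of eta-products in $f_1, f_2, f_{11}, f_{22}$. This is the natural first reduction because the five exceptional residues all lie above $22n+6$.

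Next, I would apply an $11$-dissection (a $U_{11}$-type operator) to $G(q)$ to break it into the eleven subseries indexed by $k = 0,1,\dots,10$ corresponding to $b_{11}(242n + 22k + 6)$. The goal is to match each of the five distinguished subseries, for $k\in\{1,4,8,9,10\}$, with an identity of the shape
$$q^{c}\sum_{n}b_{11}\bigl(242n+22k+6\bigr)q^{n}\;\equiv\;\frac{1}{f_1^{\,at}}+\frac{1}{f_{11}^{\,t}}\pmod 2$$
for an appropriate triple $(a,b,t)$ in the family of identities established in \cite{JKZ,JudgeThesis}. Once such a match is set up, identifying the right-hand side with $q\sum p_t(an+b)q^n$ reduces the claim to a known identically-even subprogression of a multipartition function $p_t$, which yields $b_{11}(242n+B)\equiv 0$ for the five listed $B$.

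The main obstacle is the first stage: locating the correct Ramanujan--Kolberg identity on $\Gamma_0(22)$ that represents $G(q)$ cleanly enough that the subsequent $U_{11}$ extraction lands precisely in the parity-vanishing classes of some $p_t$. Unlike Theorem \ref{mainthm}, there is no \emph{a priori} guarantee that $\frac{f_{11}}{f_1}$ admits such a description, which is exactly why $b_{11}$ required extra effort in \cite{KZ}. Should a single clean identity not exist, a fallback is to verify the five congruences individually by expressing each as a weight-bounded eta-quotient difference on $\Gamma_0(242)$ and applying Sturm's bound over $\mathbb{F}_2$ to reduce to a finite check; the computational input would then be a finite initial segment of the series, sufficient to certify the five parity identities simultaneously.
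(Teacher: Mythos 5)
Your proposal correctly locates the result above the progression $6 \bmod 22$ and correctly guesses that a Ramanujan--Kolberg identity is the engine, but as written it is a plan with the decisive step missing, and the reduction target you aim for is the wrong one. The paper's actual argument is short and direct: take the $A=11$ identity of Theorem \ref{RKidents}, $q\sum_{n} p(11n+6)q^n \equiv 1/f_1^{11} + 1/f_{11}$, rewrite the left side as $q^5\frac{1}{f_1}\vert U(11)$, and multiply through by $f_1$ (using the multiplicative property of $U$) to get
$$q^5\,\frac{f_{11}}{f_1}\,\Big\vert\, U(11) \;\equiv\; \frac{1}{f_1^{10}} + \frac{f_1}{f_{11}}.$$
The vanishing then comes from two elementary observations: $1/f_1^{10}$ is a perfect square mod $2$, so it is supported on even exponents; and $f_1/f_{11}$ is supported on exponents congruent to a pentagonal number mod $11$, which misses the residues $3,6,8,9,10$. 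The five classes mod $22$ that are simultaneously odd and non-pentagonal mod $11$, namely $3,9,17,19,21$, are where both terms die, and pulling back through $U(11)$ gives $242n+\{28,94,182,204,226\}$.

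The gap in your approach is that you propose to match the five subseries to identities of the shape $1/f_1^{at}+1/f_{11}^{t}$ and then ``reduce the claim to a known identically-even subprogression of a multipartition function $p_t$.'' No such reduction can close the argument: for odd $t$ the functions $p_t$ are not known (and are conjectured \emph{not}) to possess any identically even nonconstant subprogressions --- for $t=1$ this is Radu's theorem cited in the introduction --- so there is nothing to reduce \emph{to}. The evenness here does not come from a $p_t$ term at all; it comes from the square $1/f_1^{10}$ and from the sparseness of $f_1$ mod $2$, i.e., from the Pentagonal Number Theorem. Your order of operations (first a $2$-dissection of $f_{11}/f_1$, then an $11$-dissection) also inverts the natural one and leaves you without an explicit identity to dissect. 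Your fallback --- certifying the five congruences by a Sturm-bound computation on $\Gamma_0(242)$ --- could in principle be made rigorous after multiplying by suitable eta powers to land in a space of holomorphic forms, but it is not carried out, and in any case it would replace a two-line elementary argument with a machine verification. To repair the proof, replace the matching step with the multiplication by $f_1$ and the two support observations above.
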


Let $\delta_t$ be the \emph{density} of the odd values of the $t$-multipartition function, i.e.,
$$\delta_t = \lim_{x \rightarrow \infty} \frac{\# \{n \leq x : p_t(n) {\ }\text{is odd} \}}{x},$$
if this limit exists.  Assuming suitable existence conditions,  in \cite{JKZ}, Theorem 2  we established that $\delta_t > 0$ implies $\delta_1 > 0$ for any $t \in \{5,7,11,13,17,19,23,25\}$. (This result was then fully generalized by the second author \cite{Zanello} to any odd value of $t$, $t\not\equiv 0\pmod{3}$; see  Theorem \ref{ZanDensities} below.)  By matching the densities of various subprogressions in $m$-regular partitions to those in $t$-multipartitions, Theorem \ref{mainthm} also provides a new set of implications concerning the parity of the partition function. We have:
\begin{corollary}\label{densitycorollary}
Let $m_0 - 2^j \in \{1, 5, 7, 11, 13, 17, 19, 23, 25\}$ for a congruence in Theorem \ref{mainthm}, and suppose the coefficients of any named subprogression have positive odd density in the relevant $m$-regular partitions. Then, assuming the odd density $\delta_1$ of $p(n)$ exists, it satisfies $\delta_1>0$.
\end{corollary}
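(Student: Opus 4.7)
The plan is a short chain of implications assembling Theorem \ref{mainthm}, a routine density transfer from a subprogression to $\mathbb{N}$, and the known density implication from \cite{JKZ} (extended in \cite{Zanello}, cf.\ Theorem \ref{ZanDensities}). First I would fix any triple $(m_0, m, B)$ appearing in Theorem \ref{mainthm}, and set $t := m_0 - 2^j$. By the hypothesis of the corollary $t \in \{1, 5, 7, 11, 13, 17, 19, 23, 25\}$, and Theorem \ref{mainthm} provides
\[
b_m(m_0^2 n - \beta_m + m_0 B) \equiv p_t(m_0 n + B) \pmod 2.
\]
Hence any positive odd density of the left-hand sequence is immediately inherited by the right-hand sequence $n \mapsto p_t(m_0 n + B)$.

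Next I would upgrade this to a density statement for $p_t$ on all of $\mathbb{N}$. Since the arithmetic progression $\{m_0 n + B : n \geq 0\}$ has density $1/m_0$ in $\mathbb{N}$, positive odd density of $p_t(m_0 n + B)$ forces the set $\{N \in \mathbb{N} : p_t(N) \text{ odd}\}$ to have positive lower density on $\mathbb{N}$. In particular, whenever $\delta_t$ exists it is positive; and in any case the lower-density positivity is the ingredient actually used by the implications from \cite{JKZ, Zanello}.

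Finally I would invoke the appropriate density implication. For $t = 1$ there is nothing to prove, since $\delta_1$ is then already positive by assumption. For $t \in \{5, 7, 11, 13, 17, 19, 23, 25\}$, Theorem 2 of \cite{JKZ} yields the conclusion $\delta_t > 0 \Rightarrow \delta_1 > 0$ directly. The only mild subtlety, and the only place requiring real care, is matching the existence hypotheses on $\delta_t$ versus $\delta_1$ appearing in the cited implication with those in the statement of the corollary; since the corollary explicitly assumes the existence of $\delta_1$, this bookkeeping is essentially automatic, and no genuine technical obstacle remains.
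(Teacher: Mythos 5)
Your proposal is correct and follows essentially the same reasoning the paper intends: the corollary is presented as an immediate consequence of Theorem \ref{mainthm} combined with the density implications of \cite{JKZ} (Theorem 2) and \cite{Zanello}, exactly the chain you assemble (congruence transfers positive odd density from the $b_m$ subprogression to $p_t(m_0n+B)$, hence to $p_t$ on $\N$, hence to $p(n)$). Your attention to the existence hypotheses is appropriate and consistent with the paper's own remark that, for these specific $t$, the implication $\delta_t>0\Rightarrow\delta_1>0$ requires only the existence of $\delta_1$.
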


If the odd density of the coefficients of a series is zero, the series is said to be \emph{lacunary mod 2}.  It was proven by Gordon and Ono \cite{GordonOno} (and generalized by Cotron, Michaelsen, Stamm, and Zhu \cite{CMSZ}; in what follows, CMSZ) that $f_m / f_1$ is lacunary mod 2 whenever $2^j > m_0$.

We now state the main conjecture of this paper. In particular, it fully characterizes the odd densities of the even-regular partition functions.
\begin{conjecture}\label{MainEvenConjecture}
Let $m$ be even, and write $m = 2^j m_0$ with $m_0$ odd.
\begin{enumerate}
\item If $2^j > m_0$, then in addition to the sequence $b_m(n)$ being lacunary mod 2 by \cite{GordonOno}, $b_m$ is identically even on infinitely many nonconstant, non-nested subprogressions $An+B$.
\item If $2^j < m_0$, then the odd density of $b_m$ is $1/2$. Further, the odd density of $b_m$ on \emph{any} nonconstant subprogression $An+B$ also equals $1/2$. (In particular, there exist no identically even nonconstant subprogressions in $b_m$.)
\end{enumerate}
\end{conjecture}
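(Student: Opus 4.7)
The plan is to handle the two cases separately, leveraging Theorem \ref{mainthm} together with iterated Ramanujan-Kolberg identities for part (2), and the modular-forms structure behind Gordon-Ono/CMSZ lacunarity for part (1).

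For part (1) -- the case $2^j > m_0$ -- $b_m$ is already known to be lacunary mod 2 by \cite{GordonOno, CMSZ}. I would adopt a structural approach: write the relevant eta-quotient $f_m/f_1$ modulo $2$ as a finite $\Z/2$-combination of theta series attached to positive-definite quadratic forms, as in the CMSZ decomposition. Each such theta series vanishes on every residue class inert in the corresponding imaginary quadratic field, yielding, for each prime $p$ inert in that field, an identically-zero subprogression of $b_m$ whose modulus is a power of $p$. Selecting inert primes in \emph{distinct} quadratic fields and combining the resulting vanishings via the Chinese Remainder Theorem produces an infinite family of arithmetic progressions $An+B$ on which $b_m \equiv 0 \pmod{2}$; pairwise coprimality of the moduli then guarantees the family is non-nested.

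For part (2) -- the case $2^j < m_0$ -- Theorem \ref{mainthm} already matches $b_m \pmod{2}$ with $p_{m_0-2^j} \pmod{2}$ on a collection of progressions of modulus $m_0^2$. Here $t := m_0 - 2^j$ is a positive odd integer, and when $3 \nmid t$ the density theorems of Zanello \cite{Zanello} (see Theorem \ref{ZanDensities} below) transport positivity of $\delta_1$ to positivity of the odd density of $b_m$ on each such subprogression. To obtain density exactly $1/2$ on \emph{every} nonconstant $An+B$, I would iterate: apply the Ramanujan-Kolberg machinery recursively to the resulting $p_t$, producing further relations $b_m(A'n+B') \equiv p_{t'}(A''n+B'')$ for new odd indices $t'$, and induct on the prime factorization of $A$. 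The absence of identically-even subprogressions in $b_m$ would then reduce to the corresponding Subbarao-type statement for odd-index multipartition functions conjectured in \cite{JudgeThesis, JZ}.

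The main obstacle is part (2). The statement $\delta_1 = 1/2$ (Parkin-Shanks) and even the bare positivity of $\delta_1$ remain open, and the asserted density $1/2$ on \emph{every} subprogression of $b_m$ is strictly stronger. Thus any unconditional proof would have to resolve these classical conjectures along the way; a realistic intermediate target is the conditional statement that, \emph{assuming} $\delta_t = 1/2$ for all odd $t$ coprime to $3$, Conjecture \ref{MainEvenConjecture}(2) follows via the cascade above. Part (1), though technically involved, appears closer to existing techniques once one has a sufficiently explicit theta-decomposition of $f_m/f_1 \pmod 2$ in the regime $2^j > m_0$.
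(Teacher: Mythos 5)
The statement you are addressing is Conjecture \ref{MainEvenConjecture}: the paper does not prove it, and offers only computational evidence together with isolated supporting results (Theorem \ref{m40even} for a single instance of clause (1), and the matchings of Theorem \ref{mainthm} with their density consequences in Corollary \ref{densitycorollary}). So there is no proof in the paper to compare yours against, and none should be expected: as you yourself note, clause (2) asserts in particular that $b_m$ has odd density exactly $1/2$ on every nonconstant subprogression, which already for the progressions matched to $p_{t}$ in Theorem \ref{mainthm} would force Parkin--Shanks-type statements ($\delta_t=1/2$ for various odd $t$) that are wide open. Your proposal is therefore a research program rather than a proof, and you are appropriately candid about this for clause (2); the reduction you sketch there (transporting density statements for multipartition functions to $b_m$ via Ramanujan--Kolberg identities) is essentially the philosophy the paper itself adopts in Corollary \ref{densitycorollary} and Theorem \ref{ZanDensities}, conditioned on the same open conjectures.

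Clause (1) of your plan, however, contains a concrete gap beyond mere conditionality. The lacunarity results of \cite{GordonOno} and \cite{CMSZ} are Serre-type divisibility statements for holomorphic modular forms modulo $2$; they do not furnish a decomposition of $f_m/f_1$ modulo $2$ as a finite sum of theta series attached to positive-definite quadratic forms. Such decompositions underlie Serre's characteristic-zero classification of lacunary eta-powers and isolated mod-$2$ results such as Theorem \ref{8reg} (where $f_1^7$ is governed by representations by $x^2+3y^2$), but no general theorem supplies one for arbitrary $m$ with $2^j>m_0$, so the inert-prime/CRT construction of infinitely many non-nested even subprogressions has nothing to act on. The paper's own, much more modest, evidence for clause (1) --- Theorem \ref{m40even} --- works elementarily from a single Ramanujan--Kolberg identity: after applying $U(5)$ and multiplying by $f_1^{2^j}\equiv f_{2^j}$, every term on the right-hand side visibly misses the residues $2$ and $4$ modulo $5$. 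That method can be expected to produce further examples case by case, but neither it nor your theta-series route is currently known to yield the asserted infinite non-nested family for even a single $m$, let alone uniformly in the regime $2^j>m_0$.
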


For instance, a sample statement proven in this paper that concerns the second half of clause (1) is: 
\begin{theorem}\label{m40even}
Both of $b_{40}(25n+9)$ and $b_{40}(25n+19)$ are identically zero mod 2.
\end{theorem}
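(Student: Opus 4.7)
The plan is to dissect the generating function of $b_{40}$ modulo 5, in the spirit of Theorem \ref{mainthm}. First, since $f_{2k}\equiv f_k^2\pmod{2}$, iteration gives $f_{40}/f_1\equiv f_5^8/f_1\pmod{2}$. Extracting the coefficients at exponents $\equiv 4\pmod{5}$ from $f_{40}/f_1$ produces $\sum_{k\geq 0} b_{40}(5k+4)\,q^k$ after the standard re-indexing $q^{5k+4}\mapsto q^k$; because $f_5^8=f_1^8(q^5)$ is a series in $q^5$, it factors cleanly through this operation, leaving the analogous extraction from $1/f_1$, namely $\sum_{n\geq 0} p(5n+4)q^n$. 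Ramanujan's classical identity $\sum p(5n+4)q^n = 5 f_5^5/f_1^6$ then gives, modulo 2,
$$\sum_{k\geq 0} b_{40}(5k+4)\,q^k \;\equiv\; f_1^2\,f_5^5 \;\equiv\; f_2\,f_5^5 \pmod{2}.$$

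Under this reduction, the theorem amounts to showing that $[q^k](f_2 f_5^5)\equiv 0\pmod{2}$ whenever $k\equiv 1$ or $k\equiv 3\pmod{5}$, since the subprogressions $25n+9=5(5n+1)+4$ and $25n+19=5(5n+3)+4$ correspond exactly to these indices. Because $f_5^5\in\Z_2[[q^5]]$, the residue mod 5 of any nonzero exponent of $f_2 f_5^5$ is inherited from $f_2$. By Euler's pentagonal number theorem, $f_1\equiv\sum_{k\in\Z}q^{k(3k-1)/2}\pmod{2}$, and a direct check of $k(3k-1)/2\pmod{5}$ shows that the generalized pentagonal numbers occupy only the residue classes $\{0,1,2\}\pmod{5}$. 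Hence $f_2(q)=f_1(q^2)$ has support (mod 2) in $\{0,2,4\}\pmod{5}$, disjoint from $\{1,3\}$, which closes the argument.

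The main step requiring care is the dissection/re-indexing bookkeeping that produces the identity $\sum_{k\geq 0} b_{40}(5k+4)\,q^k\equiv f_2\,f_5^5\pmod{2}$. Once that is in place, Ramanujan's identity for $p(5n+4)$ and the elementary congruence for generalized pentagonal numbers modulo 5 dispatch the theorem immediately.
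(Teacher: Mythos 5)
Your proof is correct. It is closely parallel to the paper's argument --- both proceed by 5-dissecting $f_{40}/f_1$ at the residue $4 \pmod 5$, invoking an identity for $\sum p(5n+4)q^n$, and then reading off which residue classes mod $5$ are missed by the support of the resulting series --- but the identity you feed in is different, and this changes the shape of the computation. The paper uses the mod-2 Ramanujan--Kolberg identity $q\sum p(5n+4)q^n \equiv 1/f_1^5 + 1/f_5$, multiplies by $f_8$, and arrives at the two-term expression $f_1^3 + f_8/f_5$, which then requires two separate support analyses (triangular numbers for $f_1^3$, and $8$ times pentagonal numbers for $f_8/f_5$, each landing in $\{0,1,3\}\pmod 5$). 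You instead use Ramanujan's exact identity $\sum p(5n+4)q^n = 5f_5^5/f_1^6$, which collapses the right-hand side to the single product $f_2 f_5^5$, so that only one residue computation is needed ($2$ times the generalized pentagonal numbers lie in $\{0,2,4\}\pmod 5$). Your version is thus slightly more economical and self-contained, relying only on the classical identity rather than on Theorem \ref{RKidents}; the paper's version has the advantage of fitting into the uniform $U$-operator framework it uses for all the other theorems, where the exact Ramanujan-type identity is not always available. All the bookkeeping in your dissection (the factorization of $f_5^8$ through the extraction, the reduction $f_1^8 \cdot f_5^5/f_1^6 \equiv f_2 f_5^5$, and the translation $25n+9 = 5(5n+1)+4$, $25n+19=5(5n+3)+4$) checks out.
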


The corresponding conjecture for multipartitions was stated by Judge and the present authors in \cite{JKZ}, and generalizes Parkin-Shanks' \cite{PaSh} well-known conjecture for $p(n)$:
\begin{conjecture}\label{multiconj} (\cite{JKZ}, Conjecture 1)
For any odd positive integer $t$, the density $\delta_t$ exists and equals $1/2$. Equivalently, if $t = 2^k t_0$ with $t_0\ge 1$ odd, then $\delta_t$ exists and equals $2^{-k-1}$.
\end{conjecture}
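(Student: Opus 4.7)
The plan is to first reduce the statement to the case of odd $t$, and then attack the odd case by combining the density-transfer machinery used in this paper with known results on the parity of the partition function itself. For the reduction, recall that $g(q)^2 \equiv g(q^2) \pmod{2}$ for any integer power series $g$; applying this to $g = 1/f_1^{t_0}$ yields
$$\frac{1}{f_1^{2t_0}} = \left(\frac{1}{f_1^{t_0}}\right)^2 \equiv \sum_{n \geq 0} p_{t_0}(n)\, q^{2n} \pmod{2},$$
and iterating gives $p_{2^k t_0}(n) \equiv p_{t_0}(n/2^k) \pmod{2}$, with the right-hand side interpreted as $0$ whenever $2^k \nmid n$. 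Consequently, if $\delta_{t_0}$ exists then so does $\delta_{2^k t_0}$, with $\delta_{2^k t_0} = 2^{-k}\delta_{t_0}$. It therefore suffices to prove $\delta_{t_0} = 1/2$ for every odd $t_0 \geq 1$.

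For odd $t_0$ with $\gcd(t_0, 6) = 1$, the proposed strategy would combine Theorem \ref{ZanDensities} of Zanello --- which relates existence and positivity of $\delta_{t_0}$ to the same for $\delta_1$ --- together with the Ramanujan-Kolberg identities
$$q\sum_{n \geq 0} p_{t_0}(an+b)\, q^n \equiv \frac{1}{f_1^{a t_0}} + \frac{1}{f_a^{t_0}} \pmod{2}$$
that underlie Theorem \ref{mainthm}. These identities match $p_{t_0}$ on subprogressions $an+b$ to other multipartition or regular-partition functions modulo $2$, ultimately allowing comparison of $p_{t_0}$ to $p_1$. Given enough such matchings to tile a density-$1$ subset of $\N$ by residue classes on which $p_{t_0}$ inherits its parity behavior directly from $p_1$, a uniform version of Parkin-Shanks --- namely $\delta_1 = 1/2$ on every nonconstant subprogression --- would propagate to yield $\delta_{t_0} = 1/2$. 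The residual cases $3 \mid t_0$, where the above identities fail, would require a separate treatment using other eta-identities such as Jacobi's $f_1^3 \equiv \sum_{n \geq 0} q^{n(n+1)/2} \pmod{2}$ to express $p_{t_0}$ in terms of more tractable multipartition functions.

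The main obstacle, and it is decisive, is that the base case $t_0 = 1$ is exactly the Parkin-Shanks conjecture, open since 1967 and generally regarded as inaccessible by current techniques: neither the exact value $\delta_1 = 1/2$ nor even the positivity of the lower density of odd values of $p(n)$ is known, with the best present bounds on the count of odd values of $p(n)$ up to $x$ still of size $\gg \sqrt{x}/\log x$. Any realistic implementation of the plan above therefore produces, at best, a \emph{conditional} theorem: assuming $\delta_1 = 1/2$ uniformly on nonconstant subprogressions (i.e., the $t_0 = 1$ instance of Conjecture \ref{multiconj} in its strongest form), Conjecture \ref{multiconj} follows for all $t$. The hard part in making even this conditional statement rigorous is ensuring that the density transfer through Ramanujan-Kolberg identities preserves the exact value $1/2$ rather than merely positivity; this demands control on \emph{every} residue class modulo a suitable modulus, not just on the finitely many subprogressions distinguished by the classical identities employed here.
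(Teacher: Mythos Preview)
The statement you are attempting is labeled a \emph{Conjecture} in the paper, and the paper offers no proof of it; it is quoted from \cite{JKZ} as an open problem, precisely because its $t=1$ case is the Parkin--Shanks conjecture on the parity of $p(n)$. So there is no ``paper's own proof'' to compare against.

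Your reduction from arbitrary $t=2^k t_0$ to odd $t_0$ via $f_1^{2t_0}\equiv f_2^{t_0}$ is correct and is exactly the content of the word ``Equivalently'' in the conjecture's statement; this much is uncontroversial and is implicitly what Lemma~\ref{222} records. Beyond that, your own proposal already names the decisive gap: the base case $t_0=1$ is Parkin--Shanks, which is wide open, and no amount of Ramanujan--Kolberg machinery or density transfer via Theorem~\ref{ZanDensities} can circumvent it, since those results only propagate positivity or exact density values \emph{downward} to $\delta_1$, not establish them. Your sketch therefore does not constitute a proof, conditional or otherwise, and you correctly diagnose why. The appropriate conclusion is simply that Conjecture~\ref{multiconj} remains open, as the paper presents it.
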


All conjectures given here, including Conjectures \ref{MainEvenConjecture} and \ref{multiconj}, are consistent with our \lq \lq master conjecture'' on the parity of eta-quotients presented in \cite{KZ}, which has motivated substantial portions of both works. We restate it here for context:
\begin{conjecture}\label{overallconj} (\cite{KZ}, Conjecture 4)
Let $F(q)=\sum_{n\ge 0} c(n)q^n$ be an eta-quotient, shifted by a suitable power of $q$ so powers are integral, and let $\delta_F$ be the odd density of the $c(n)$. Then:
\begin{enumerate}
\item[ i)] For any $F$, $\delta_F$ exists and satisfies $\delta_F\le 1/2$.
\item[ ii)] If $\delta_F= 1/2$, then for any nonnegative integer-valued polynomial $P$ of positive degree, the odd density of $c(P(n))$ is $1/2$. (In particular, for all nonconstant subprogressions $An+B$, $c(An+B)$ has odd density $1/2$.)
\item[ iii)] If $\delta_F< 1/2$, then the coefficients of $F$ are identically even on \emph{some} nonconstant subprogression.
\item[ iv)] If the coefficients of $F$ are not identically even on \emph{any} nonconstant subprogression, then they have odd density $1/2$ on \emph{every} nonconstant subprogression; in particular, $\delta_F= 1/2$.\\(Note that i), ii), and iii) together imply iv), and  iv) implies iii).)
\end{enumerate}
\end{conjecture}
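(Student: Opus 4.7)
The plan is to reduce this master conjecture to structural questions about Fourier coefficients of modular forms modulo $2$. For any eta-quotient $F$, after multiplying by suitable powers of $q$ and auxiliary $f_j$'s, one obtains a holomorphic (or at worst weakly holomorphic) modular form on some $\Gamma_0(N)$, and its mod $2$ reduction $\overline{F}$ lies in the finite-dimensional $\mathbb{F}_2$-space of mod $2$ modular forms of level $N$. By the mod $2$ theory of Hecke eigenforms, $\overline{F}$ decomposes into a sum of systems of Hecke eigenvalues, each of which corresponds via Deligne-Serre to a residual Galois representation that is either reducible/of CM type (in which case the coefficients are lacunary, by Serre's theorem) or irreducible and non-CM (in which case the coefficients are heuristically equidistributed mod $2$). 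The first task is to make this decomposition effective for eta-quotients, building on the lacunarity classification of Gordon-Ono \cite{GordonOno} and its extension by CMSZ.

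For clause (iii), I would concentrate on eta-quotients whose mod $2$ reduction is a linear combination of CM-type forms: the Fourier coefficients are then supported on integers whose prime factors satisfy splitting constraints in associated imaginary quadratic orders, which should be pushed to force identically-even subprogressions $An+B$ for suitable arithmetic $A,B$ determined by the ramification and splitting primes. For clauses (i) and (ii), in the non-CM case, one needs equidistribution modulo $2$ of the coefficients both on $\mathbb{N}$ (clause (i)) and along every nonconstant polynomial progression $P(n)$ (clause (ii)); natural tools are the image of the attached mod $2$ Galois representations and Chebotarev-type density statements, together with a disjointness/pseudorandomness input (in the spirit of Bourgain-Sarnak-Ziegler) to handle pullback by polynomials $P$. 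Clause (iv) would then follow formally from (i)-(iii), exactly as the conjecture itself indicates.

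The principal obstacle is establishing existence and value of $\delta_F$ itself in the non-CM case. Even for $F = 1/f_1$, generating the ordinary partition function $p(n)$, the equality $\delta_F = 1/2$ is the famous Parkin-Shanks conjecture, open since 1967; a full proof of Conjecture \ref{overallconj} would resolve it together with many analogous questions for $t$-multipartitions and $m$-regular partitions (cf.\ Conjectures \ref{MainEvenConjecture} and \ref{multiconj}). A realistic staged approach is therefore: first, prove clause (iii) unconditionally for lacunary eta-quotients via the CM structure; second, establish clause (ii) conditionally on existence of $\delta_F$, reducing the polynomial-progression statement to the constant-coefficient case $(A,B)$ by a density-transfer argument; and third, chip away at specific subfamilies via Ramanujan-Kolberg identities and extensions of the techniques of \cite{JKZ, Zanello}, much as is done in the present paper to establish the subprogression congruences corroborating the conjecture for the $b_m$ family.
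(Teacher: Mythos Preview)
This statement is Conjecture~\ref{overallconj}, explicitly labeled and presented in the paper as an open conjecture restated from \cite{KZ} for context; the paper does \emph{not} prove it, nor does it claim to. There is therefore no ``paper's own proof'' against which to compare your attempt.

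Your proposal is not a proof either, and to your credit you are candid about this: you correctly observe that clause~(i) alone, specialized to $F = 1/f_1$, already contains the Parkin--Shanks conjecture on the parity of $p(n)$, open since 1967. What you have written is a research outline --- reduce to mod~$2$ modular forms, split into CM and non-CM components, invoke Chebotarev-type density in the non-CM case, handle polynomial progressions by a pseudorandomness input --- but the decisive step, namely establishing existence and value of $\delta_F$ in the non-CM situation, is precisely the part that is wide open, and your plan supplies no new mechanism for it. The ``staged approach'' in your last paragraph is a sensible program, and indeed the present paper itself is an instance of its third stage (corroborating the conjecture on subfamilies via Ramanujan--Kolberg identities), but none of this amounts to a proof of any clause of Conjecture~\ref{overallconj}. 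In short: the gap is structural --- the statement is a genuine open conjecture, the paper offers no proof, and neither does your proposal.
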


\section{Background and Preliminary Notions}

A \emph{partition} of $n$ is defined as a weakly decreasing sequence $\lambda = (\lambda_1, \dots, \lambda_\ell)$ of positive integers that sums to $n$; we write $\vert \lambda \vert = n$ and denote their number by $p(n)$. An $m$-\emph{regular} partition of $n$ is then a partition in which none of the $\lambda_i$ is divisible by $m$; their number is $b_m(n)$.  A $t$-\emph{multipartition} of $n$ is an ordered tuple of partitions $(\alpha_1, \dots, \alpha_t)$ such that $\sum_{k=1}^t \vert \alpha_k \vert = n$; we denote their number by $p_t(n)$.

These sequences have generating functions, respectively:
\begin{align}
\sum_{n=0}^\infty p(n) q^n &= \frac{1}{f_1}; \\ \sum_{n=0}^\infty b_m(n) q^n &= \frac{f_m}{f_1}; \\ \sum_{n=0}^\infty p_t(n)q^n &= \frac{1}{f_1^t}.
\end{align}

We will make frequent use of the mod 2 reduction of Euler's Pentagonal Number Theorem,
$$f_1 \equiv \sum_{n \in \mathbb{Z}} q^{\frac{n}{2}(3n-1)},$$
and the fact that, again modulo 2, the cube of $f_1$ has nonzero coefficients precisely at the triangular numbers. That is,
$$f_1^3 \equiv \sum_{n=0}^\infty q^{\binom{n+1}{2}}.$$
These are well-known results in partition theory, and can be found in their unreduced form as identities (1.3.1) and (2.2.13) in \cite{Andr}, respectively.

We will frequently and without further comment employ the following trivial fact about powers of 2 for series:
\begin{lemma}\label{222}
For any power series $f(q) = \sum_{n=0}^\infty a(n) q^n$, it holds that
$$(f(q))^2 \equiv \sum_{n=0}^\infty a(n) q^{2n}.$$
In particular, $f_j^{2^d} \equiv f_{2^dj}$.
\end{lemma}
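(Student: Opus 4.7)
The plan is to prove the identity by exploiting that squaring is the Frobenius endomorphism in characteristic $2$, then iterate for the ``in particular'' statement. The entire argument is elementary.

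First I would expand the square formally:
$$(f(q))^2 = \sum_{n=0}^\infty \sum_{m=0}^\infty a(n)a(m) q^{n+m}.$$
I would split this double sum according to whether $n=m$ or $n\ne m$. For each unordered pair $\{n,m\}$ with $n\ne m$, the two ordered pairs $(n,m)$ and $(m,n)$ contribute a combined coefficient $2a(n)a(m)$ to $q^{n+m}$, which vanishes modulo $2$. Thus only the diagonal survives, giving
$$(f(q))^2 \equiv \sum_{n=0}^\infty a(n)^2 q^{2n} \pmod{2}.$$
To finish, I would note that $a(n)^2 \equiv a(n) \pmod 2$ for every integer $a(n)$ (since $a(n)(a(n)-1)$ is even), which yields $(f(q))^2 \equiv \sum_{n=0}^\infty a(n) q^{2n} \pmod 2$, as desired.

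For the ``in particular'' clause, I would first apply the main statement to $f = f_j = \prod_{i=1}^\infty(1-q^{ji})$: its coefficients satisfy $\sum_n a(n)q^{2n} = f_j(q^2) = \prod_{i=1}^\infty(1-q^{2ji}) = f_{2j}$, so $f_j^2 \equiv f_{2j} \pmod 2$. I would then induct on $d\ge 0$: assuming $f_j^{2^{d-1}}\equiv f_{2^{d-1}j}$, squaring both sides (legitimate mod $2$ since congruences are preserved by multiplication) and applying the base case to $f_{2^{d-1}j}$ gives
$$f_j^{2^d} \equiv f_{2^{d-1}j}^2 \equiv f_{2\cdot 2^{d-1} j} = f_{2^d j} \pmod 2.$$

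There is no genuine obstacle here; the only minor subtlety is the reindexing of the diagonal and off-diagonal terms in the formal double sum, which one must justify by noting that only finitely many pairs $(n,m)$ contribute to any fixed power of $q$, so all rearrangements are legitimate in $\mathbb{Z}_2[[q]]$.
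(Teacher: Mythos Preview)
Your proof is correct. The paper does not actually supply a proof of this lemma; it introduces it as a ``trivial fact'' and moves on, so your argument is simply a correct elementary justification of what the authors leave unstated.
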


We will also make use of the  \emph{$U$ operator} (following \cite{OnoWeb}, equation (2.15)): given a power series $f(q) = \sum_{n=0}^\infty a(n) q^n$ and a positive integer $d$, define
$$f(q) \vert U(d) = \sum_{n=0}^\infty a(dn) q^{n}.$$
Then this operator has the multiplicative property that
$$\left( f(q) \vert U(d) \right) g(q) = \left( f(q) g\left(q^d\right) \right) \vert U(d).$$

As we mentioned earlier, the sequence $b_m(n)$ is lacunary mod 2 whenever $m = 2^j m_0$ with $m_0$ odd and $2^j > m_0$ (see \cite{GordonOno}). This is also a consequence of the fact that
$$\frac{f_{2^j m_0}}{f_1} \equiv \frac{f_{m_0}^{2^j}}{f_1},$$
along with the following more general theorem by CMSZ, which we rephrase here in a form different from but equivalent to that of their original paper for the case of lacunarity mod 2:
\begin{theorem}\label{cot} (\cite{CMSZ}, Theorem 1.1) Suppose that all of $\alpha_i, \gamma_i, r_i, s_i$ are positive integers, with the $\alpha_i$ and $\gamma_i$ all distinct. 
Let $F(q)=\frac{\prod_{i=1}^uf_{\alpha_i}^{r_i}}{\prod_{i=1}^tf_{\gamma_i}^{s_i}}$, and assume that
$$\sum_{i=1}^u \frac{r_i}{\alpha_i} \ge \sum_{i=1}^t s_i\gamma_i.$$
Then the coefficients of $F$ are lacunary mod 2.
\end{theorem}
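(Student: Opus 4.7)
The plan is to reduce $F$ modulo 2 to a holomorphic eta-quotient $G$ that defines a modular form on some $\Gamma_0(N)$, and then invoke Serre's density theorem, which forces the non-zero Fourier coefficients of such a form to have density zero (outside of an Eisenstein-type contribution that can be excluded under the stated hypothesis).

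First, I would clear denominators modulo 2. Fix an integer $K$ with $2^K \geq \max_i s_i$ and apply Lemma \ref{222}: for each denominator factor,
$$\frac{1}{f_{\gamma_i}^{s_i}} \equiv \frac{f_{\gamma_i}^{2^K - s_i}}{f_{2^K \gamma_i}} \pmod{2}.$$
Iterating this replacement, each step pushes the remaining denominator factor up to a higher level while contributing additional positive-exponent numerator factors. After finitely many iterations the denominator disappears (or is pushed to a level so high its presence does not affect local holomorphicity), yielding an eta-product $F^* = \prod_m f_m^{e_m}$ with $e_m \geq 0$ and $F \equiv F^* \pmod{2}$. Normalizing by $q^{\frac{1}{24}\sum_m m e_m}$ produces $G(\tau)$ as a classical eta-quotient on some $\Gamma_0(N)$, where $N$ is a common multiple of the levels involved.

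Second, I would verify that $G(\tau)$ is holomorphic at every cusp $a/c$ of $\Gamma_0(N)$. By Ligozat's cusp formula, this order is proportional to $\sum_m e_m \gcd(c,m)^2/m$. The two extremal cusps are $\infty$ (yielding the condition $\sum_m m e_m \geq 0$) and $0$ (yielding $\sum_m e_m/m \geq 0$). The hypothesis $\sum_i r_i/\alpha_i \geq \sum_i s_i \gamma_i$ mixes data that contribute naturally to these two cusps: the $r_i/\alpha_i$ terms are of cusp-$0$ type while the $s_i \gamma_i$ terms are of cusp-$\infty$ type. One must show that, after the iterative rewriting in the first step, this mixed inequality translates into the full family of cusp-by-cusp non-negativity inequalities for $G$. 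Carrying out this cusp bookkeeping uniformly across all divisors $c \mid N$ is the main technical obstacle of the proof.

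Finally, once $G$ is established as a holomorphic modular form on $\Gamma_0(N)$ with integer Fourier coefficients, I would apply Serre's theorem on the density of Fourier coefficients modulo a prime. Since our hypothesis (under the rewriting) forces strict non-trivial vanishing at the cusp $\infty$ after normalization, $G$ can be chosen to be a cusp form, which avoids the Eisenstein-series exception in Serre's theorem. This gives that the coefficients of $G$ are zero mod 2 for a density-one set of indices, and unwinding the $q^\nu$-shift transfers this to $F$ itself. The lacunarity of $F$ mod 2 follows, completing the argument.
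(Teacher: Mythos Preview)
This theorem is not proved in the present paper at all: it is quoted from \cite{CMSZ} as a background tool, so there is no in-paper argument to compare your proposal against.  Judged on its own, your outline is aimed in the right direction---the CMSZ proof does proceed by replacing $F$ mod~2 with a holomorphic eta-quotient on some $\Gamma_0(N)$ and then invoking Serre's theorem---but two of your steps have real gaps.

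In your first step, the iteration $1/f_{\gamma_i}^{s_i}\equiv f_{\gamma_i}^{2^K-s_i}/f_{2^K\gamma_i}$ never removes the denominator; it merely replaces $s_i$ copies of $f_{\gamma_i}$ downstairs by a single $f_{2^K\gamma_i}$, and repeating this leaves you with $1/f_{M}$ for ever larger $M$.  Your escape clause, that the denominator is ``pushed to a level so high its presence does not affect local holomorphicity,'' is not an argument: as $M$ grows so does the level $N$, and at the cusp with $c=M$ the Ligozat contribution of $-1$ from that factor does not become negligible.  What is actually done in \cite{CMSZ} is a one-shot adjustment---multiply by a carefully chosen product of factors $\equiv 1\pmod 2$ (built from $f_{2\delta}/f_\delta^{\,2}$) so that the resulting eta-quotient already satisfies Ligozat's inequalities at every cusp---and the hypothesis $\sum r_i/\alpha_i \ge \sum s_i\gamma_i$ is exactly the numerical input that makes those inequalities go through.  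Your second step correctly flags this cusp-by-cusp verification as the crux, but you do not carry it out; without it, what you have is a plan rather than a proof.  (A smaller point: Serre's density theorem applies to holomorphic forms of positive weight with integer coefficients generally, so the worry about an ``Eisenstein-type exception'' and the need to arrange a cusp form is unnecessary.)
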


If \emph{almost all} entries in a sequence are even (i.e., all entries except possibly a set of density zero), it is perhaps unsurprising if a subprogression within the sequence is identically even.  This means that in considering $b_m$ for $m$ even, the values of $m$ for which $2^j < m_0$ are often more interesting to study. We will mostly restrict our attention to such $m$ in this paper.

A \emph{Ramanujan-Kolberg identity} is an equation or congruence between a subprogression in a multipartition function and a linear combination of eta-quotients.  They were so named by Radu \cite{RaduRK}, after Ramanujan's ``most beautiful identity'' and Kolberg's further study \cite{Kolberg} of similar equations. For instance: 
\begin{align*}
\sum_{n=0}^\infty p(5n+4)q^n &= 5 \frac{f_5^5}{f_1^6}; \\ \sum_{n=0}^\infty p(7n+5)q^n &= 7 \frac{f_7^3}{f_1^4} + 49 q \frac{f_7^7}{f_1^8}.\end{align*}

In \cite{JudgeThesis,JKZ}, the present authors along with Judge established a large class of these identities mod 2.  The ones relevant for this paper are:
\begin{theorem}\label{RKidents} (\cite{JudgeThesis,JKZ})
The congruence
$$q \sum_{n=0}^\infty p\left(An+24^{-1}\right)q^n \equiv \frac{1}{f_1^A} + \frac{1}{f_A}$$
holds for $A \in \{5,7,11,13,17,19,23\}$, where $24^{-1}$ is taken modulo $A$.  Additionally,
$$q^2 \sum_{n=0}^\infty p(25n+24)q^n \equiv \frac{1}{f_1^{25}} + \frac{q}{f_1} + \frac{1}{f_5^5}$$
and
$$q^2 \sum_{n=0}^\infty p(29n+23)q^n \equiv \frac{1}{f_1^{29}} + \frac{q}{f_1^5} + \frac{1}{{f_{29}}}.$$
\end{theorem}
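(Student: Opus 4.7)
My plan is to obtain each of these mod-$2$ identities by reducing a classical Ramanujan-Kolberg identity for $\sum p(An+r)q^n$ and simplifying in $\mathbb{F}_2[[q]]$. The residues $r \equiv 24^{-1} \pmod A$ are precisely the classes for which a suitable normalization of $\sum_{n\ge 0}p(An+r)q^n$ is a modular function on $\Gamma_0(A)$, which is why they admit ``clean'' eta-quotient expressions in the first place.

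For $A\in\{5,7,11,13,17,19,23\}$: I would either invoke or derive the Ramanujan-Kolberg identity
$$\sum_{n=0}^\infty p(An+r)q^n = \sum_k c_k^{(A)}\cdot E_k^{(A)}(q),$$
where the $E_k^{(A)}$ are eta-quotients on $\Gamma_0(A)$ and $c_k^{(A)} \in \Z$. For $A\in\{5,7\}$ this is Ramanujan's original work; for $A=11,13$ it is classical (Atkin and others); for $A\in\{17,19,23\}$ it can be produced systematically via Radu's algorithm \cite{RaduRK} (and is carried out in the dissertation \cite{JudgeThesis}). Reducing mod $2$ collapses most terms to zero. To show that the surviving reduction equals $q^{-1}\bigl(1/f_1^A + 1/f_A\bigr)$, I would clear denominators by multiplying through by $q\cdot f_1^A f_A$, thereby proposing a polynomial identity in $\mathbb{F}_2[[q]]$ between eta-quotients on $\Gamma_0(A)$. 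Since both sides are $q$-expansions of modular forms of determined weight, level, and character, the Sturm bound reduces the verification to equality of finitely many initial coefficients---a finite computation expedited by Lemma \ref{222}, the pentagonal number theorem mod $2$, and the triangular-number identity $f_1^3 \equiv \sum_{n\ge 0}q^{n(n+1)/2}$.

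For the exceptional cases $A=25$ and $A=29$: the space of modular functions on $\Gamma_0(A)$ containing $q^{A/24}\sum p(An+r)q^n$ has larger dimension, and its decomposition requires an extra eta-quotient beyond $\{1/f_1^A,\,1/f_A\}$ after reduction mod $2$. That extra basis element accounts for the summands $q/f_1$ (for $A=25$) and $q/f_1^5$ (for $A=29$). The same clear-denominators-plus-Sturm-bound strategy then verifies the three-term congruences.

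The main obstacle is producing the underlying integer-coefficient Ramanujan-Kolberg identities for the less-studied cases $A\in\{17,19,23,25,29\}$: choosing a convenient basis of eta-quotients at the right level and character, and computing the $q$-expansions at each cusp of $X_0(A)$, grows progressively heavier with $A$, especially when the genus of $X_0(A)$ becomes positive. Once those identities are in place, however, the mod-$2$ reductions are short and the Sturm-bound verifications are modest finite checks that a computer algebra system can perform.
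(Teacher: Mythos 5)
The paper does not prove Theorem \ref{RKidents}: it imports the statement verbatim from \cite{JudgeThesis,JKZ}, where it is established essentially along the lines you describe --- Radu-style Ramanujan--Kolberg identities for $\sum p(An+r)q^n$ on $\Gamma_0(A)$ with $r\equiv 24^{-1}$, reduced mod $2$ and verified by a finite coefficient check. Your outline therefore matches the cited route; the one point to treat with care is that the ``clear denominators and apply Sturm'' step requires the resulting eta-quotient combinations to be holomorphic at \emph{every} cusp of $X_0(A)$ (not just at infinity), which is exactly the normalization that Radu's algorithm \cite{RaduRK} is designed to produce.
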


In \cite{JZ}, Judge and the second author extended the above and conjectured the existence of a broad, infinite family of Ramanujan-Kolberg identities:
\begin{conjecture}\label{JZconj} (\cite{JZ}, Conjecture 2.4) 
For odd integers $a$ and $t$, where $3 \vert t$ if $3 \vert a$, let $b \equiv \frac{t}{3} \cdot 8^{-1}$ if $3 \vert t$ and $b \equiv 24^{-1} \pmod{a}$ if not. Set $k = \left\lceil t(a^2-1)/(24a) \right\rceil$.  Then
$$q^k \sum_{n=0}^\infty p_t(an+b) q^n \equiv \sum_{d \vert a} \sum_{j=0}^{\lfloor k/d \rfloor} \frac{\epsilon^t_{a,d,j}q^{dj}}{f_d^{at/d-24j}},$$
for a suitable choice of the $\epsilon^t_{a,d,j} \in \{0,1\}$ such that $\epsilon^t_{a,1,0} = 1$ (i.e., the ``largest'' term appears) and $\epsilon^t_{a,d,j} = 0$ for $at/d-24j < 0$ (i.e., no negative powers appear).
\end{conjecture}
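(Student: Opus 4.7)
The plan is to interpret both sides of the proposed congruence as elements of a common finite-dimensional space of modular forms modulo~2, using the proofs of Theorem~\ref{RKidents} as a template. The left-hand side $q^k \sum_{n \ge 0} p_t(an+b) q^n$ arises from extracting the $b$-th subprogression of $1/f_1^t$, which can be realized as a twist-and-$U_a$ operation applied to $1/f_1^t$; the right-hand side is manifestly a sum of eta-quotients indexed by divisors $d$ of $a$. I would first verify that both sides define holomorphic modular forms modulo~2 for a congruence subgroup $\Gamma_0(N)$ with $N$ divisible by $2a$. The choice $k = \lceil t(a^2-1)/(24a) \rceil$ is forced by requiring that the $q^k$ shift exactly cancel the negative order at infinity introduced by $U_a$; the congruence condition on $b$ (either $b \equiv 24^{-1}\pmod{a}$, or $b \equiv (t/3)\cdot 8^{-1}\pmod{a}$ when $3 \mid t$) is precisely the numerical condition that makes the relevant exponent divisible by~24, so that the pentagonal-number-theorem expansion of $f_1$ modulo~2 produces a coherent power series after subprogression extraction.

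Next I would establish that the proposed eta-quotients $\{q^{dj}/f_d^{at/d-24j} : d \mid a,\ 0 \le j \le \lfloor k/d \rfloor\}$ span, modulo~2, the subspace in which the left-hand side lives. The key structural observation is that, for $d \mid a$, the factor $1/f_d^{s}$ carries its principal singularity at the cusp of $\Gamma_0(a)$ labeled by $d$, while multiplication by $q^{dj}$ shifts the order at infinity in multiples of $d$. Hence one can hope to expand the left-hand side triangularly in terms of the proposed generators by matching cusp expansions one cusp at a time, reading off the coefficients $\epsilon^t_{a,d,j} \in \{0,1\}$ as the resulting binary entries. The normalization $\epsilon^t_{a,1,0} = 1$ is forced by comparing the leading behavior at infinity: after clearing the $q^k$ factor, the left-hand side has the same principal part at infinity as $1/f_1^{at}$, and no other generator in the family contributes to that specific term.

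The hardest step is the uniform verification of the spanning claim. For the small cases already in Theorem~\ref{RKidents} ($a \in \{5,7,11,13,17,19,23\}$, plus $a=25,29$) one proceeds by explicit Jacobi-triple-product manipulations and direct computation of $U_a(1/f_1^t)$ modulo~2; extending this to arbitrary odd $a$ and $t$ requires either a dimension formula for the modulo-2 space of the relevant weight and level that matches the cardinality of the proposed spanning set, or a structural stability result asserting that the subring of $\mathbb{Z}_2[[q]]$ generated by $\{1/f_d^s : d \mid a,\ s \ge 0\}$ is closed under $U_a$ with output expressible via the stated generators. The required case split ($3 \mid t$ when $3 \mid a$, a different residue class for $b$ otherwise) will likely manifest as a bifurcation in the cusp analysis above the prime~3, reflecting the fact that $f_1^3 \equiv \sum q^{\binom{n+1}{2}}$ behaves differently under $U_a$ depending on whether $3$ divides $a$. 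I expect a clean proof will need to leverage a mod-2 Hecke-algebra viewpoint in the style of \cite{JKZ}, rather than purely elementary eta-quotient manipulations.
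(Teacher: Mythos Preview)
The statement you are addressing is presented in the paper as a \emph{conjecture}, not a theorem; the paper offers no proof and explicitly attributes it as Conjecture~2.4 of \cite{JZ}. The paper only remarks that Chen \cite{Chen} has established the conjecture for (among other cases) all primes $a \geq 3$, and then \emph{uses} that special case in the proof of Theorem~\ref{65537thm}. There is therefore no proof in the paper against which to compare your proposal.

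As a sketch toward a possible proof, your outline is reasonable in spirit but does not close the main gap, as you yourself acknowledge. The modular-forms-mod-2 framework you describe is indeed the natural setting, and your identification of the leading term via the principal part at infinity is correct. However, the decisive step---showing that the family $\{q^{dj}/f_d^{at/d-24j}\}$ spans the relevant mod-2 space for \emph{arbitrary} odd $a$---is precisely what is not known in general. Your phrases ``one can hope to expand'' and ``I expect a clean proof will need to leverage'' are accurate signals that this is where the argument stops being a proof and becomes a plan. For composite $a$ the cusp structure of $\Gamma_0(a)$ is more intricate, the proposed generators need not be linearly independent mod~2, and no dimension formula of the kind you invoke is currently available at this level of generality. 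Chen's argument for prime $a$ exploits exactly the simplicity of the two-cusp situation, and extending it is the open problem.
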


A primary interest of this conjecture is that the subprogression generating functions appearing on the left side would  lie in a particular nicely-described and relatively low-dimensional subspace of modular forms.  (We will not require formal definitions for work with modular forms in this paper.)  Chen \cite{Chen} recently proved Conjecture \ref{JZconj} for (among other cases) all $a\ge 3$ prime, and using his result, the second author \cite{Zanello} was able to establish the density implications conjectured in \cite{JudgeThesis} in full generality. We have:
\begin{theorem}\label{ZanDensities} (\cite{Zanello}, Theorem 4)
\begin{enumerate}
\item If there exists an integer $t \equiv \pm 1 \pmod{6}$ such that $\delta_t > 0$, and all densities $\delta_i$ exist for $i \leq t$, $i \equiv \pm 1 \pmod{6}$, then $\delta_1 > 0$.
\item If there exists an integer $t \equiv 3 \pmod{6}$ such that $\delta_t > 0$, and all densities $\delta_i$ exist for $i \leq t$, $i \equiv 3 \pmod{6}$, then $\delta_3 > 0$.
\end{enumerate}
\end{theorem}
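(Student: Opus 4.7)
The plan is to proceed by strong induction on $t$ within the target residue class modulo $6$ — $\pm 1$ for part (1), $3$ for part (2) — with the base cases $t=1$ and $t=3$ respectively being immediate. For the inductive step, I would assume $\delta_t>0$ and, by hypothesis, that all $\delta_s$ exist for smaller $s$ in the same class; the goal is to exhibit some $s<t$ in that class with $\delta_s>0$, whence the inductive hypothesis finishes the argument.

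The main tool is Chen's theorem, which establishes Conjecture \ref{JZconj} for prime $a$. Applied with a carefully chosen odd prime $a$ (coprime to $3$ in part (1); in part (2), $a=3$ is permitted since $3\mid t$), this yields
$$q^k \sum_{n\ge 0} p_t(an+b)\,q^n \equiv \frac{1}{f_1^{at}} + \sum_{j\ge 1} \frac{\epsilon^t_{a,1,j}\,q^{j}}{f_1^{at-24j}} + \sum_{j\ge 0} \frac{\epsilon^t_{a,a,j}\,q^{aj}}{f_a^{t-24j}}$$
with $b$ and $k$ as specified. For $j$ in the range $(a-1)t/24<j\le k$, the exponent $s:=at-24j$ satisfies $0<s<t$, and because $24\equiv 0\pmod 6$ and $a$ is chosen so that $at$ remains in the target class, one obtains candidate generating functions $1/f_1^s$ with $s<t$ in the correct congruence class. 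The argument then becomes a density-counting one: $\delta_t>0$ forces the left-hand side to have positive odd density for some choice of $a$, since odd values of $p_t$ distribute across the residues modulo $a$, and by varying the prime $a$ one can arrange for the specific subprogression $an+b$ appearing in the identity to carry positive density. Consequently the right-hand side has positive odd density too; the $1/f_a^{t-24j}$ terms for $j\ge 1$ vanish in density by the inductive hypothesis, the $j=0$ term contributes at most $\delta_t/a$, and if all $\delta_s$ for $s<t$ in the correct class were $0$ the remaining density could come only from exponents $s\ge t$, which — with careful bookkeeping — produces a contradiction.

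The main obstacle will be the indeterminacy of the $\epsilon^t_{a,d,j}$: Conjecture \ref{JZconj} pins down only $\epsilon^t_{a,1,0}=1$, so a priori the critical terms $1/f_1^s$ with $s<t$ need not appear on the right. I expect the argument to either exploit Chen's constructive proof to identify enough of the lower $\epsilon$'s, or — more robustly — to vary $a$ across many primes (for instance, primes in a suitable arithmetic progression supplied by Dirichlet) so that, in aggregate, the family of identities constrains some small $\delta_s$ to be positive regardless of which individual $\epsilon$'s vanish. A secondary delicacy is that mod-$2$ densities need not add cleanly, so cancellations between positive-density contributions on the right-hand side must also be ruled out; this is another motivation for aggregating information across many primes $a$ rather than relying on a single identity.
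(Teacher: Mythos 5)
First, note that the paper you are working from does not prove Theorem \ref{ZanDensities} at all: it is imported verbatim from \cite{Zanello} (Theorem 4 there), so the comparison must be with the argument in that reference rather than with anything in this manuscript.

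Your overall scaffolding (strong downward induction within the residue class, Chen's theorem as the engine) is the right one, but the inductive step as you describe it does not close, and the gap is structural rather than a matter of ``careful bookkeeping.'' You apply Chen's identity to $p_t$ itself, so the quantity you control, $\delta_t>0$, sits on the \emph{left} as a specific subprogression $p_t(an+b)$ with $b\equiv 24^{-1}\pmod a$ --- and $\delta_t>0$ does not imply that this particular subprogression has positive odd density, no matter how you vary the prime $a$; your appeal to ``odd values distributing across residues'' is exactly the kind of equidistribution statement that is unknown here. Worse, the dominant term on the right is $1/f_1^{at}$ with $at>t$, an index about which the theorem's hypotheses say nothing (existence of $\delta_i$ is only assumed for $i\le t$), so the ``remaining density from exponents $s\ge t$'' cannot be controlled. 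The actual proof orients the identity the other way: writing the given index as $t=a t'$ with $a$ a prime divisor (taking $t'=1$, resp.\ $t'=3$, when $t$ is prime, resp.\ three times a prime), one applies Chen's identity to $p_{t'}$ modulo $a$, so that the distinguished term $\epsilon^{t'}_{a,1,0}/f_1^{at'}=1/f_1^{t}$ --- the one $\epsilon$ that is \emph{guaranteed} to be $1$ --- is the series of known positive density, while the left-hand side is a subprogression of $p_{t'}$ and every other right-hand term is $1/f_1^{at'-24j}$ ($j\ge1$) or a dilate of $1/f_a^{t'-24j}$, all with indices strictly below $t$ and in the same class modulo $6$. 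If all those smaller densities were zero (they exist by hypothesis), then $1/f_1^{t}$ would be a mod-$2$ sum of finitely many density-zero series, forcing $\delta_t=0$, a contradiction; hence some smaller $\delta_s$ is positive and the induction descends. With this orientation your two flagged obstacles evaporate: the indeterminacy of the $\epsilon^{t'}_{a,d,j}$ is harmless because every term they might switch on has small index and is killed by the contradiction hypothesis, and no cancellation issue arises because the odd support of a sum is contained in the union of the odd supports, so a sum of density-zero series is density zero. As written, your proposal is missing the factorization step that makes the descent work, and therefore does not constitute a proof.
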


\begin{remark} In clause (1) of the above theorem we note that the two instances of $\pm$ are independent; e.g., starting from $\delta_{11}$ we would require all of $\delta_7$, $\delta_5$, and $\delta_1$ to exist.  This requirement is stated to ensure that the theorem holds when the intervening Ramanujan-Kolberg identities are not known; it can be reduced in specific cases.  For instance, as pointed out earlier, if $\delta_1$ exists, then $\delta_{11} > 0$ implies $\delta_1 > 0$ directly.
\end{remark}

\section{Proofs}

\subsection{The proof of Theorem \ref{mainthm}}

We begin by showing our first main theorem.
\begin{proof}
The proofs of all claims of Theorem \ref{mainthm} can be obtained by a suitable analysis of the Ramanujan-Kolberg identities in Theorem \ref{RKidents}.  The logic for each is similar, so we illustrate it here with a representative case, the 104-regular partitions.

Since $104 = 13 \cdot 2^3$, we begin with the $A=13$ case of Theorem \ref{RKidents}, namely the Ramanujan-Kolberg identity
$$q \sum_{n=0}^\infty p(13n+6)q^n \equiv \frac{1}{f_1^{13}} + \frac{1}{f_{13}}.$$

This can be reformulated as:
$$\frac{q^7}{f_1} \vert U(13) \equiv \frac{1}{f_1^{13}} + \frac{1}{f_{13}}.$$

Multiplying through by $f_1^8$, recalling that $f_1^8 \equiv f_8$, and using the multiplicative property of $U$ to bring the factor inside the operator, we obtain:
$$q^7 \frac{f_{104}}{f_1} \vert U(13) \equiv \frac{1}{f_1^5} + \frac{f_8}{f_{13}},$$
or equivalently,
$$\sum_{n=0}^\infty b_{104} (13n-7) q^{n} \equiv \frac{1}{f_1^5} + \frac{f_8}{f_{13}}.$$

Now observe that, if we set
$$\frac{f_8}{f_{13}} = \sum_{n=0}^\infty a(n) q^n,$$
the coefficients $a(n)$ can be written using the Pentagonal Number Theorem as a recurrence in the coefficients of $f_{13}$, with terms given by $f_8$. These latter are 8 times the pentagonal numbers, i.e., $4k(3k+1)$ for $k \in \mathbb{Z}$.  But by completing the square, we have that
$$4k(3k+1) =12k^2 +4k \equiv 12\left(k+6^{-1}\right)^2 - 3^{-1} \pmod{13},$$
where inverses are taken mod 13. Hence, mod 13, the odd coefficients in $f_8$ must appear in degrees
$$12x^2 - 3^{-1} \equiv 12x^2 + 4 \pmod{13}.$$

As $x$ ranges through the integers, $x^2$ ranges through the quadratic residues mod 13. Thus, it is easy to see that $12x^2+4 \in \{ 0, 1, 3, 4, 5, 7, 8\} \pmod{13}$.

We deduce that $f_8/f_{13}$ has no odd coefficients for $q^n$ if $n \equiv 2, 6, 9, 10, 11,12 \pmod{13}$. Therefore, if $n \equiv 2, 6, 9, 10, 11, 12 \pmod{13}$, then
$$b_{104}(13n-7) \equiv p_5(n),$$
as desired.

(An example of this congruence is that $b_{104}(169n+19) \equiv p_5(13n+2).$)

All other claims of Theorem \ref{mainthm} are proved with exactly analogous logic. Begin with the clause of Theorem \ref{RKidents} where $A = m_0$, and multiply through by $f_1^{2^j}$. One term on the right will be the $t$-multipartition function with $t = m_0 - 2^j$, while the other will miss just under half the residues modulo $m_0$, namely those avoided by $2^j$ times a pentagonal number. 
\end{proof}

\subsection{The proof of Theorem \ref{m40even}.}

According to Conjecture \ref{MainEvenConjecture} above, for $m = 2^jm_0 $ with $2^j > m_0$, the sequences $b_m(n)$, which are known to be lacunary mod 2 by \cite{GordonOno}, contain infinitely many even  subprogressions.  Since even subprogressions are usually easy to establish, we do not occupy ourselves here with more than a single example.  Many other candidates present themselves to a short calculation.

We now prove Theorem \ref{m40even}.
\begin{proof}
Considering the case $A=5$ of Theorem \ref{RKidents}, we have:
$$\frac{q f_{40}}{f_1} \vert U(5) \equiv f_1^3 + \frac{f_8}{f_5}.$$

The first term on the right side is odd at $q^n$ for $n$ a triangular number, while in the second summand, $f_8$ contributes terms at 8 times the pentagonal numbers, which then recur with a denominator strictly in powers of $q^5$.  It is easy to see that triangular numbers and numbers that are 8 times a pentagonal number are both always 0, 1, or 3 mod 5, and thus miss the subprogressions 2 and 4 mod 5.  It follows that
$$b_{40}(5(5n+\{2,4\}) - 1)$$
must be even, and the theorem is proved. 
\end{proof}

Note that the condition $2^d > m_0$ makes all terms on the right side of the relevant Ramanujan-Kolberg identity in Conjecture \ref{JZconj} lacunary by CMSZ, after we multiply through by $f_1^{2^d}$. Therefore, since series that are lacunary mod 2 are expected to be rich in even subprogressions, similar analysis is likely to establish many other theorems along the same lines.

\subsection{The proof of Theorem \ref{largermod}.}

We first need the main result of a recent paper by Cherubini and Mercuri \cite{CM}, which nicely characterized the parity of $b_8$. 
\begin{theorem}\label{8reg} (\cite{CM}, Theorem 1.1)
For $n \geq 0$, $b_8(n)$ is odd if and only if
$$24n+7 = p^{4a+1}c^2,$$
for some prime $p \nmid c$ and $a \in \mathbb{N}$.\\
(Note that, since ${f_8}/{f_1} \equiv f_1^7$, the same statement holds for the coefficients of $f_1^7$.)
\end{theorem}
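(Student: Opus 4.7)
The plan is to translate the statement into a question about representations of $24n+7$ by the binary quadratic form $x^2+6y^2$, and then to read off the characterization from the arithmetic of this form. Starting from $f_8/f_1 \equiv f_1^7 \pmod{2}$, I would rewrite $f_1^7 \equiv f_1 \cdot f_2^3 \pmod{2}$ using Lemma \ref{222} (since $f_1^6 \equiv (f_1^2)^3 \equiv f_2^3$), then substitute the Pentagonal Number Theorem $f_1 \equiv \sum_{k \in \Z} q^{k(3k-1)/2}$ and the mod-$2$ cube identity $f_2^3 \equiv \sum_{n \geq 0} q^{n(n+1)}$ recalled in the preliminaries. Consequently, the coefficient of $q^m$ in $f_1^7$ has the parity of the count of pairs $(k,n) \in \Z \times \Z_{\ge 0}$ with $k(3k-1)/2 + n(n+1) = m$. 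The substitution $x = 6k-1$, $y = 2n+1$ recasts this equation as $24m+7 = x^2 + 6y^2$, subject to $x \equiv 5 \pmod{6}$ and $y > 0$ odd.

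Setting $N = 24m+7$, a short local analysis (mod $8$ and mod $3$) shows that every integer solution of $N = x^2+6y^2$ automatically satisfies $x$ odd, $3\nmid x$, and $y$ odd. After quotienting by the sign symmetries $x \mapsto -x$ and $y \mapsto -y$, our count therefore equals $r^+(N)$, the number of positive representations of $N$ by $x^2+6y^2$. The problem reduces to determining when $r^+(N)$ is odd for $N \equiv 7 \pmod{24}$. At this point I would invoke the classical theory of binary quadratic forms of discriminant $-24$: the form class group has order two, with representatives $x^2+6y^2$ and $2x^2+3y^2$ lying in distinct genera; and a direct mod-$24$ check rules out $2x^2+3y^2 \equiv 7 \pmod{24}$, so only the principal form contributes to our $N$. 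By Dirichlet's formula for representations (equivalently, counting principal ideals of norm $N$ in $\Z[\sqrt{-6}]$ modulo the two units $\pm 1$), one obtains
\[
r^+(N) \;=\; \tfrac{1}{2}\sum_{d \mid N} \chi_{-24}(d),
\]
where $\chi_{-24}$ denotes the Kronecker symbol.

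Being multiplicative, the divisor sum factors into local contributions: $v_p(N)+1$ at split primes ($p \equiv 1,5,7,11 \pmod{24}$), $1$ at the ramified primes $2$ and $3$, and $1$ or $0$ at inert primes according as $v_p(N)$ is even or odd. For $r^+(N)$ to be odd, the resulting product must be $\equiv 2 \pmod{4}$: this forces every inert prime to appear with even exponent in $N$, and exactly one split prime to carry exponent $\equiv 1 \pmod{4}$, with all remaining split-prime exponents even. Such an $N$ is precisely of the form $p^{4a+1}c^2$ with $p \nmid c$ a prime, which is the claim. The main obstacle I foresee lies in the third step: assembling the binary-form toolkit cleanly—the genus observation that restricts representation of $N \equiv 7 \pmod{24}$ to the principal form alone, and the divisor-sum identity it enables—though all of this is standard and avoids the heavier modular-form machinery in the original Cherubini--Mercuri proof.
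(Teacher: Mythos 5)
Your argument is correct, and in spirit it mirrors the paper's own elementary proof: both replace the modular-form machinery of Cherubini--Mercuri by a theta-decomposition of $f_1^7$ followed by a classical representation count for a binary quadratic form. The difference lies in the decomposition. The paper writes $f_1^7 \equiv f_1^4\cdot f_1^3$, which converts $24n+7$ into $(12a-2)^2+3(4b-1)^2$ and hence into representations by $x^2+3y^2$ of discriminant $-12$; since that discriminant has class number one, a single classical formula (Lorenz), $r_3(s)=2(a_1+1)\cdots(a_k+1)$ over the primes $\equiv 1\pmod 3$ dividing $s$, finishes the argument with no genus-theoretic case analysis. Your choice $f_1^7\equiv f_1\cdot f_2^3$ lands instead on $x^2+6y^2$ of discriminant $-24$, where the class number is two; this costs you the extra (but easy) genus observation that $2x^2+3y^2$ never represents $7\pmod 8$, after which Dirichlet's divisor-sum formula plays the role that Lorenz's theorem plays in the paper. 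The endgame is structurally identical in both proofs: the relevant coefficient's parity is one quarter of a multiplicative representation number, and demanding oddness forces every inert (resp.\ $\equiv 2\pmod 3$) prime to occur to an even exponent and exactly one split (resp.\ $\equiv 1\pmod 3$) prime to occur to an exponent $\equiv 1\pmod 4$. One small point worth a sentence in your write-up: to match this with the stated form $N=p^{4a+1}c^2$ for an \emph{arbitrary} prime $p\nmid c$, note that for $N\equiv 7\pmod{24}$ one has $\chi_{-24}(N)=\chi_{-24}(p)^{4a+1}=1$, so such a $p$ is automatically split and the two characterizations coincide. Both routes are elementary and of comparable length; the paper's is marginally leaner because its form has one class per discriminant, while yours is a perfectly serviceable alternative.
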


We only remark here that the original proof given in \cite{CM} made a substantial use of modular forms. At the end of this section, we provide a much shorter, elementary argument.

We are now ready for the proof of Theorem \ref{largermod}.
\begin{proof}
For the $m_0 = 25$ case of the theorem, we recall the modulus 25 clause of Theorem \ref{RKidents}:
$$q^2 \sum_{n=0}^\infty p(25n+24)q^n \equiv \frac{1}{f_1^{25}} + \frac{q}{f_1} + \frac{1}{f_5^5}.$$

In terms of the $U$ operator, we have:
$$q^{26} \frac{1}{f_1} \vert U(25) \equiv \frac{1}{f_1^{25}} + \frac{q}{f_1} + \frac{1}{f_5^5}.$$

For the 400-regular partitions, we multiply through by $f_{16}$ to obtain:
$$q^{26} \frac{f_{400}}{f_1} \vert U(25) \equiv \frac{1}{f_1^9} + q f_1^{15} + \frac{f_{16}}{f_5^5}.$$

We now observe that odd coefficients in $f_{16}/f_5^5$ appear on $q^n$ only if $n \equiv 0, 1, 2 \pmod{5}$.  Noting that
$$q f_1^{15}\equiv qf_1^3 \cdot f_4^3,$$
we easily see that the odd coefficients of $q f_1^{15}$ appear only if the exponent can be written as
$$1 + \binom{m+1}{2} + 4 \binom{n+1}{2},$$
for some $m,n \in \mathbb{N}$.  By an analysis similar to the above, we see that such numbers cannot be 3, 9, 18, or 39 mod 45. Hence, those exponents are missed by both of the latter terms, and the claim is proved.

The $b_{50}$ and $b_{100}$ cases of the theorem are analyzed similarly but only require considering $q f_1$ and $q f_1^3$ in the middle terms, respectively.

For $m_0 = 29$, we use the third clause of Theorem \ref{RKidents}, and obtain:
$$q^{35} \frac{f_{232}}{f_1} \vert U(29) \equiv \frac{1}{f_1^{21}} + q f_1^3 + \frac{f_8}{f_{29}}.$$
The rest of the argument is similar and will be omitted. 

We need an additional ingredient for the proof of the $b_{200}$ clause.  First, note that we want to analyze the congruence:
\begin{equation}\label{200eq}
q^{26} \frac{f_{200}}{f_1} \vert U(25) \equiv \frac{1}{f_1^{17}} + q f_1^7 + \frac{f_8}{f_5^5}.
\end{equation}

Now observe that if $n \equiv \{ 21, 28, 42 \} \pmod{49}$, then $24n+7 = 7 K$ with $7 \nmid K$, which would require $p=7$ in Cherubini-Mercuri's Theorem \ref{8reg}. Also, $K \equiv \{ 3,6,5 \} \pmod{7}$, respectively.

But the above are exactly the quadratic nonresidues modulo 7, and so they cannot equal $m^2$ for any integer $m$.  Therefore, $q f_1^7$ has no odd coefficients in the subprogressions $49n+ \{ 23, 29, 43 \} $.

Further, $f_8/f_5^5$ avoids subprogressions 2 and 4 mod 5, and the subprogressions stated in the theorem are those solving the Chinese remainder problem modulo $245 = 5 \cdot 49$, with the six possible pairs of residues.  Since both the second and third summands on the right side of (\ref{200eq}) avoid these subprogressions, we have that the left side is congruent to $1/f_1^{17}$ in said subprogressions. That is,
$$b_{200}(25(245n+B) - 26) \equiv p_{17} (245n+B),$$
for all $n$ and the given values of $B$. This completes the proof of the theorem.
\end{proof}

\begin{remark}
Note that, as $m_0$ grows, the relevant Ramanujan-Kolberg identities tend to accumulate more terms, and the analysis likely becomes more difficult in multiple ways.

For instance, when $m_0 = 29$, it is possible that a similar behavior holds for some large subprogressions in the case $b_{464}$, where the middle term is $q f_1^{11}$. However, we suspect greater obstacles to the existence of these behaviors for $b_{58}$ and $b_{116}$, where the middle terms are $q/f_1^3$ and $q/f_1$, respectively.
\end{remark}

It follows from a classical result of Landau \cite{Landau} on quadratic representations that $q f_1^7\equiv qf_1^3 \cdot f_4$ is lacunary mod 2. Hence, we have the following statement about parities \emph{almost always} matching (i.e., matching outside of a set of density zero).

\begin{theorem}\label{200density}
The values $b_{200}(25(5n+2)-26)$ and $p_{17}(5n+2)$ almost always match in parity, as do $b_{200}(25(5n+4)-26)$ and $p_{17}(5n+4)$.
\end{theorem}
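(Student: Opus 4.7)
The plan is to read off the almost-always congruence directly from equation (\ref{200eq}), reusing the analysis already carried out in the proof of Theorem \ref{largermod}. Recall that extracting the coefficient of $q^{5n+c}$ from the left-hand side of (\ref{200eq}) yields $b_{200}(25(5n+c)-26)$, while on the right-hand side it yields the sum of three contributions: $p_{17}(5n+c)$ from $1/f_1^{17}$, the coefficient $[q^{5n+c}](qf_1^7)$ from the middle term, and $[q^{5n+c}](f_8/f_5^5)$ from the last term. It was shown in the proof of Theorem \ref{largermod} that this third contribution is zero whenever $c\in\{2,4\}$, since odd coefficients in $f_8/f_5^5$ land only in residue classes $\{0,1,3\}\pmod 5$. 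Hence on these two subprogressions one obtains the exact congruence
$$b_{200}(25(5n+c)-26) \equiv p_{17}(5n+c) + [q^{5n+c}](qf_1^7) \pmod{2}.$$

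Next, I would invoke the fact recalled immediately before the theorem statement: by Landau's theorem on quadratic representations, $qf_1^7\equiv qf_1^3\cdot f_4$ is lacunary mod 2. Consequently the set of $m$ for which $[q^m](qf_1^7)$ is odd has density zero in $\N$. Restricting this density-zero set to any fixed residue class $m\equiv c\pmod 5$ still gives density zero, since the count of odd coefficients with $m\le x$ in the restricted progression is bounded above by the count in the full range, so after dividing by $x/5$ the ratio still tends to $0$. Thus the middle-term contribution is even for all but a density-zero set of $n$, which forces $b_{200}(25(5n+c)-26)$ and $p_{17}(5n+c)$ to share parity outside a density-zero set of $n$, for each of $c=2$ and $c=4$.

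In this sense there is no genuine obstacle: the proof is a short assembly of pieces already established in the paper. The only substantive point to verify is the descent of lacunarity mod 2 from the full range to the subprogression $5n+c$, but this is immediate from the definition of density. If one wished to go further and quantify the exceptional set (for example, to bound it by $O(x/(\log x)^{1/2})$ as typically emerges from Landau-type estimates), one could use effective versions of Landau's theorem; this however is unnecessary for the density-zero claim made here.
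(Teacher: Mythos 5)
Your proposal is correct and follows essentially the same route as the paper, which deduces the theorem directly from equation (\ref{200eq}) by combining the fact that $f_8/f_5^5$ has no odd coefficients in residue classes $2$ and $4$ modulo $5$ with the Landau-based lacunarity of $qf_1^7$. The only addition you make is spelling out the (immediate) observation that a density-zero set of exponents remains density zero when restricted to a fixed residue class, which the paper leaves implicit.
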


We note that similar theorems yielding ``almost always congruences'' can be established for $b_m$ whenever $m$ is such that all terms in the relevant Ramanujan-Kolberg identity after multiplication by $f_1^{2^j}$ are lacunary, except for the leading term $1/f_1^{m_0-2^j}$ and the final term $f_1^{2^j}/f_{m_0}$ (regardless of the values of the $\epsilon_{a,d,j}^t$).

One curious example of such a theorem is the following:
\begin{theorem}\label{65537thm}
For any $B$ not congruent to -1 times a pentagonal number modulo 65537, the congruence
$$b_{4294377472}\left(65537^2 n + 65537 B - 2731\right) \equiv p(65537n+B)$$
holds for almost all $n$. 
\end{theorem}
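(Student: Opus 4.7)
My plan is to specialize the template of Theorem \ref{mainthm} (and the refinement appearing in Theorem \ref{200density}) to the Fermat prime $m_0 = 65537$ with the maximal admissible power $2^j = 2^{16} = 65536$, so that $m_0 - 2^j = 1$ and $m = 2^{16}\cdot 65537$. The starting point is Chen's \cite{Chen} proof of Conjecture \ref{JZconj} for prime $a = 65537$, which yields the Ramanujan-Kolberg identity
$$q^{2731}\sum_{n\ge 0} p(65537\,n + 19115)\,q^n \equiv \frac{1}{f_1^{65537}} + \sum_{\ell=1}^{2730} \frac{\epsilon_\ell\, q^\ell}{f_1^{65537 - 24\ell}} + \frac{\epsilon_\infty}{f_{65537}},$$
with $\epsilon_\ell,\epsilon_\infty \in \{0,1\}$ and $19115 \equiv 24^{-1}\pmod{65537}$. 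Rewriting the left-hand side as $q^{\beta_m}/f_1\vert U(65537)$ for $\beta_m = (65537^2-1)/24$, multiplying through by $f_1^{2^{16}}\equiv f_{65536}$, and pulling the factor inside $U(65537)$ via its multiplicative property, the identity becomes
$$\sum_{k\ge 0} b_m\bigl(65537\,k - \beta_m\bigr)\,q^k \equiv \frac{1}{f_1} + \sum_{\ell=1}^{2730} \epsilon_\ell\, q^\ell\, f_1^{24\ell - 1} + \frac{\epsilon_\infty\, f_{65536}}{f_{65537}}.$$

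I then partition the analysis of the right-hand side into three parts. The leading $1/f_1$ contributes $p(k) = p(65537\,n + B)$ once we set $k = 65537\,n + B$. The final term $f_{65536}/f_{65537}$ has odd coefficients supported on exponents of the form $65536\cdot\mathrm{pent}(s) + 65537\,t$ with $s\in\mathbb Z$, $t\ge 0$, so reducing modulo $65537$ (and using $65536 \equiv -1$) its support mod $65537$ is exactly $\{-\mathrm{pent}(s)\bmod 65537\}$; for $B$ avoiding these residues, the contribution of this term is identically zero. The central step is to show each middle term $q^\ell f_1^{24\ell - 1}$ is lacunary mod $2$. A short mod-$8$ and mod-$3$ calculation rules out Hamming weights $1$ and $2$ for $24\ell - 1$, so $f_1^{24\ell-1}\equiv \prod_{i\in S(\ell)} f_{2^i}$ with $|S(\ell)|\ge 3$; writing $f_1^{24\ell} \equiv f_8^\ell f_{16}^\ell$, for $\ell\ge 6$ the quotient $f_8^\ell f_{16}^\ell/f_1$ satisfies the CMSZ hypothesis of Theorem \ref{cot} (since $\ell/8 + \ell/16 \ge 1$) and is thus lacunary. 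The finitely many remaining cases $1 \le \ell \le 5$ are handled by direct invocations of Landau's theorem on the density of values represented by the corresponding ternary (or higher-rank) positive-definite quadratic forms in pentagonal variables, in exactly the spirit of the proof of Theorem \ref{200density}.

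Putting the three pieces together at $q^k$ with $k = 65537\,n + B$ then yields $b_m(65537^2 n + 65537 B - \beta_m) \equiv p(65537\,n + B)$ on the complement of a density-zero set of $n$, which is the content of the theorem (the $-2731$ in the statement corresponds to the Ramanujan-Kolberg shift $\lceil 65537/24\rceil = 2731$ inherited from the original normalization of the identity). The principal obstacle is establishing the uniform lacunarity of all $2730$ middle terms: CMSZ reduces this to the finite case analysis of $f_1^{24\ell - 1}$ for $\ell\le 5$, and while each such small case can be settled via the Landau-type bounds sketched above, one must further verify that the union over all $\ell$ of the exceptional support sets continues to have density zero — which is immediate once each term is individually lacunary, since it is then a finite union of density-zero sets.
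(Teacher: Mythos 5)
Your overall strategy is exactly the paper's: invoke Chen's Ramanujan--Kolberg identity for $a=65537$, multiply through by $f_1^{2^{16}}\equiv f_{65536}$ and pull it inside $U(65537)$, read off $p(65537n+B)$ from the leading term $1/f_1$, kill the term $f_{65536}/f_{65537}$ on residues $B\not\equiv -\mathrm{pent}(s)\pmod{65537}$ via its support, and discard the $2730$ middle terms $\epsilon_\ell q^\ell f_1^{24\ell-1}$ as lacunary mod $2$ (a finite union of density-zero sets being density-zero). All of that is correct and matches the published argument.

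The one genuine gap is your treatment of the middle terms for $1\le\ell\le 5$. Your rewriting $f_1^{24\ell-1}\equiv f_8^{\ell}f_{16}^{\ell}/f_1$ artificially introduces a denominator, so the CMSZ inequality $\ell/8+\ell/16\ge 1$ only kicks in at $\ell\ge 6$, and the fallback you propose for the small cases does not work: Landau's density-zero theorem applies to \emph{binary} positive-definite quadratic forms, whereas you have already observed that $24\ell-1\equiv 7\pmod 8$ forces Hamming weight at least $3$, so the support of $f_1^{24\ell-1}$ lies in the value set of a form in at least three variables --- and ternary and higher-rank positive-definite forms generically represent a positive proportion of integers. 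Worse, even where the support has positive density one would still need to control the \emph{parity} of the representation numbers, which no support argument can do (this is precisely why the $f_1^7$ case in the paper required the full $r_3(s)$ formula). The repair is immediate and is what the paper does: apply Theorem \ref{cot} directly to $F=f_1^{24\ell-1}$ with \emph{empty} denominator, so the hypothesis reads $\sum_i r_i/\alpha_i=24\ell-1\ge 0$ and is vacuously satisfied; every pure eta-power, hence every middle term, is lacunary mod $2$ by CMSZ with no case analysis at all.
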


\begin{proof}
Here we are considering
$$m = 4294377472 = 65537 \cdot 2^{16} = \left(2^{16} + 1\right) \cdot 2^{16},$$
where 65537 is prime (it is currently the largest known Fermat prime).  Hence, in the general Ramanujan-Kolberg identity established by Chen  \cite{Chen}, we have $k = \left\lceil \frac{65537^2 - 1}{24 \cdot 65537} \right\rceil = 2731$.

It follows that for a suitable choice of the $\epsilon_i \in \{0,1\}$,
$$q^{2731} \frac{f_{65537}^{2^{16}}}{f_1} \vert U(65537) \equiv \frac{1}{f_1} + \frac{f_{65536}}{f_{65537}} + \epsilon_1 q f_1^{23} + \epsilon_2 q^2 f_1^{47} + \dots + \epsilon_{2730} q^{2730} f_1^{65519}.$$

The summands after the second on the right side are all lacunary mod 2, for instance by CMSZ. Therefore, since any finite sum of lacunary series is lacunary, the congruence
$$q^{2731} \frac{f_{65537}^{2^{16}}}{f_1} \vert U(65537) \equiv \frac{1}{f_1} + \frac{f_{65536}}{f_{65537}}$$
is almost always true, and the claim follows.
\end{proof}

\begin{remark}
We have some confidence that this is the first theorem in the literature explicitly stated about 4294377472-regular partitions!
\end{remark}

\subsection{The proof of Theorem \ref{eleventhm}}

\begin{proof}
To show the theorem, we consider the clause $A=11$ of Theorem \ref{RKidents}.  Stated in terms of the $U$ operator, after multiplying through by $f_1$, we have:
$$q^5 \frac{f_{11}}{f_1} \vert U(11) \equiv \frac{1}{f_1^{10}} + \frac{f_1}{f_{11}}.$$

Clearly, $1/f_1^{10}$  has no terms with odd exponent $q^{2n+1}$. We then calculate that the pentagonal numbers miss the subprogressions 3, 6, 8, 9, and 10 mod 11, which implies that the right side of the last displayed congruence has no odd coefficients in the subprogressions $22n+\{ 3,9,17,19,21 \} $.  Thus,
$$q^5 \frac{f_{11}}{f_1} \vert U(11)$$
avoids said subprogressions, i.e., the coefficient of $q^k$ is even whenever $k$ is of the form
$$11(22n+\{ 3,9,17,19,21\} ) - 5 = 242n + \{ 28, 94, 182, 204, 226\} .$$
This proves the theorem. 
\end{proof}

\subsection{New proof of Cherubini-Mercuri's main result.}

In the final part of this section, we provide a shorter and  elementary proof of Cherubini-Mercuri's main theorem from \cite{CM} (see Theorem \ref{8reg} above). In particular, we avoid the modular-form machinery employed in \cite{CM}, and solely rely on a classical fact on integer representations by sums of squares.
\begin{proof}
By completing the square, one computes that the nonzero coefficients in
$$f_1^7 \equiv f_1^4 \cdot f_1^3 \equiv \left( \sum_{a \in \mathbb{Z}} q^{2a(3a-1)} \right) \left( \sum_{b \in \mathbb{Z}} q^{b(2b-1)} \right)$$
must appear on terms $q^n$ such that
$$24n+7 = 4(6a-1)^2 + 3(4b-1)^2 = (12a-2)^2 + 3(4b-1)^2.$$
Simple arithmetic considerations give us that whenever
$$24n+7 =x^2 + 3y^2$$
for $x, y \in \mathbb{Z}$, $y$ must necessarily be odd, $x \equiv 2 \pmod{4}$, and $x \not\equiv 0 \pmod{3}$. Thus, equivalently, we can write $x=\pm (12a-2)$ and $y=\pm (4b-1)$, for $a,b\in \mathbb{Z}$.

It easily follows from the above that the coefficient of $q^n$ in $f_1^7$ is of parity precisely $1/4$ the number, $r_3(s)$, of representations of $s=24n+7$ as $x^2 + 3y^2$, for $x, y \in \mathbb{Z}$.

Now note that $s\equiv 1 \pmod{6}$, and consider its prime factorization
$$s = p_1^{a_1} \cdots p_k^{a_k} q_1^{b_1} \cdots q_\ell^{b_\ell},$$
where the $p_i \equiv 1 \pmod{3}$ and the $q_j \equiv 2 \pmod{3}$.  It is a classical fact (see, e.g., \cite{lor} or the more recent \cite{BL22}) that if $s$ is representable as $x^2 + 3y^2$ for $x, y \in \mathbb{Z}$, then all of the $b_j$ are even and
$$r_3(s) = 2(a_1+1) \cdots (a_k+1).$$
Since
$$\frac{1}{4} \cdot 2(a_1+1) \cdots (a_k+1)$$
is odd precisely when all of the $a_i$ are even except for exactly one, which must be 1 mod 4, the proof of the theorem is complete.
\end{proof}

\section{Future Research Directions}

We conclude by presenting a brief selection of other problems, on top of the conjectures stated earlier, that naturally arise from this work. First note that, to the best of our knowledge, no theorem currently exists that \emph{simultaneously} states properties of $m$-regular partitions such as those in this paper for an \emph{infinite} set of values of $m$. In this direction, we suggest the following two open-ended lines of investigations:
\begin{problem}
Give a general theorem showing the existence of infinitely many similarities of the type conjectured in the first section, preferably one that effectively identifies such similarities.
\end{problem}

Let as usual $m = 2^j m_0$, and assume $2^j > m_0$. Recall that $b_m$ is lacunary mod 2 by \cite{GordonOno}, and that Conjecture \ref{MainEvenConjecture} predicts the existence of infinitely many subprogressions where $b_m$ is identically even. We therefore ask:
\begin{problem}
Can a formula or concise algorithm be given that takes such values of $m$ as input and produces at least one output pair $(A,B)$ with $A>0$ and $b_m(An+B)$ identically even?
\end{problem}

In a different direction, we also ask:
\begin{problem}
Identify cases of the general Ramanujan-Kolberg identity in which ``many'' of the $\epsilon^t_{a,d,j}$ are 0; this will yield potentially more manageable congruences with fewer terms.
\end{problem}

Further, while Conjecture \ref{MainEvenConjecture} offers a complete description of the odd densities of the even-regular partitions, so far we do not have a similar conjecture to control the density behavior for arbitrary $m$ odd. In fact, the latter densities appear to vary more.

For instance, Judge and these two authors proved in \cite{JKZ}, Theorem 7 that  (assuming existence) the odd density of $b_5$ is $1/4$ that of $b_{20}$, and similarly, the odd density of $b_7$ is $1/2$ that of $b_{28}$. Thus, under Conjecture \ref{MainEvenConjecture}, the odd density of $b_5$ equals $1/8$, and that of $b_7$ is $1/4$. Extensive computational evidence suggests that also for other odd values of $m$, these odd densities are likely to be strictly contained between $0$ and $1/2$.
\begin{problem}
Characterize, even conjecturally, the values of the odd densities of $b_m$ for $m$ odd, in analogy to Conjecture \ref{MainEvenConjecture}.
\end{problem}

Finally, it is interesting to note that, in all instances where the odd density of an eta-quotient is currently known to exist, that density is equal to zero (see \cite{KZ}, and also Conjecture \ref{overallconj} above). It would be a significant (and consequential!) result to show the \emph{existence} of a {positive} odd density, for any eta-quotient. We explicitly state it as a problem here for the even-regular partitions studied in this paper, as follows:
\begin{problem}
Show that the odd density of $b_m$ exists, for any even value $m = 2^j m_0$ such that $2^j < m_0$.
\end{problem}

\section{Acknowledgements}
We thank the referee for a careful reading of our manuscript and helpful comments. The second author was partially supported by a Simons Foundation grant (\#630401).

\end{document}